\documentclass{amsart}

\usepackage{mathrsfs}
\usepackage{amscd}
\usepackage{amsmath}
\usepackage{amssymb}
\usepackage{amsthm}
\usepackage{epsf}
\usepackage{latexsym}
\usepackage{verbatim}
\usepackage[all, cmtip]{xy}
\usepackage{tikz}
\usetikzlibrary{positioning}
\usetikzlibrary{matrix}
\usepackage{float}
\usepackage{hyperref}
\usepackage{comment}
\usepackage{enumitem}
\usepackage{mathtools}
\usepackage{xcolor}
\usepackage{quiver}
\usepackage{thmtools}

\tikzstyle{bsq}=[rectangle, draw, thick, minimum width=.5cm, minimum height=.5cm]
\tikzstyle{bver}=[rectangle, draw, thick, minimum width=1cm, minimum height=2cm]
\tikzstyle{bhor}=[rectangle, draw, thick, minimum width=2cm, minimum height=1cm]

\usepackage[left=3.2cm, right=3.2cm]{geometry}

\newtheorem{theorem}{Theorem}[section]
\newtheorem{lemma}[theorem]{Lemma}

\newtheorem{corollary}[theorem]{Corollary}

\newtheorem{question}[theorem]{Question}
\newtheorem{varexample}[theorem]{Example}

\theoremstyle{definition}

\newtheorem{definition}[theorem]{Definition}

\def\Z{{\mathbb Z}}

\def\E{\mathcal{E}}

\def\scramble{\mathcal{S}}
\def\sep{\text{sep}}

\newcommand{\gon}{\operatorname{gon}}
\newcommand{\sn}{\operatorname{sn}}
\newcommand{\tw}{\operatorname{tw}}

\begin{document}
\title{The Scramble Number of Outerplanar Graphs}

\author[D. Rivera Laboy]{Doel Rivera Laboy}

\email{{\tt doel.riveralaboy@uky.edu}}
\date{}
\bibliographystyle{alpha}

\begin{abstract}
For planar graphs, it is known that their treewidth is bounded by $O(\sqrt{n})$, where $n$ is the number of vertices of the graph. A related invariant to treewidth, is the scramble number of graphs. Recently, Connor et. al proved that planar graphs of bounded maximal degree have scramble number bounded by $O(\sqrt{n})$. An open question is whether the scramble number of any planar graph follows this same bound. We give a definitive answer with an explicit bound for a subset of planar graphs, the simple outerplanar graphs and the simple near outerplanar graphs.

\end{abstract}

\maketitle

\section{Introduction}

\label{Sec:Intro}
Throughout, we consider all graphs to be simple. Treewidth is a graph parameter that was first introduced by Robertson and Seymour \cite{ROBERTSON}
in 1984. Roughly speaking, treewidth measures how close a graph is to a tree. It has become a parameter of interest in graph theory for two main reasons. Firstly, treewidth has
many algorithmic applications; for example, there are many results showing that NP-hard
problems can be solved in polynomial time on classes of graphs with bounded treewidth \cite{Bodlaender98}. Secondly, treewidth is closely related to other properties of graphs \cite{ParamsAndTreewidth}. It is closely related to the theories of balanced separators, minor-closed graphs, and chip firing.

A family of graphs whose treewidth has been extensively studied is the planar graphs. This has been primarily through the study of balanced separators and the separator number of planar graphs. In \cite{LiptonTarjan} Lipton and Tarjan give a proof that the separator number of a planar graph is bounded by $2\sqrt{2n}$, where $n$ is the number of vertices in the graph. Since then, several authors have improved the constant in the bound $c\sqrt{n}$. As a consequence, for planar graphs the treewidth is bounded by $O(\sqrt{n})$.

Treewidth is related to chip firing by being a lower bound for gonality of a graph \cite{deBruynGijswijt}. The gonality of a graph $G$, $\gon(G)$, corresponds to ``minimal" winning configurations in the chip firing game. Formally, gonality is the minimal degree of a rank $1$ divisor. However, treewidth is not the current best bound for the gonality of a graph. In \cite{HJJS}, the authors define the scramble number, $\sn(G)$ of a graph $G$. In that paper, they show that for any graph $G$, $\tw(G)\leq \sn(G)\leq \gon(G)$. Altogether, this leads to the question of how does scramble number behave for planar graphs. For some families of planar graphs such as trees, chains of cycles and the grid graph it was shown that the scramble number is equal to its treewidth \cite{HJJS}. For other planar graphs such as the strip graph \cite{jensen2024fibonaccisumsetsgonalitystrip} and the cube \cite{beougher2024chipfiringplatonicsolidsprimer}, the scramble number has been computed and it is bounded by $O(\sqrt{n}).$

In \cite{2026sizecomplexityscrambles} the authors proved that planar graphs of bounded degree have scramble number at most $O(\sqrt{n})$ and they also conjecture that for all planar graphs the scramble number is bounded by $O(\sqrt{n})$. Outerplanar graphs, the graphs of interest throughout this paper, are a subset of planar graphs which have treewidth at most $2$. As such, their result implies that for outerplanar graphs $sn(G) \leq 2\Delta$, where $\Delta$ denotes the highest degree of a vertex in $G$. However, there exist outerplanar graphs with $\Delta = O(n)$, such as the fan graph which has $\Delta = n-1$. In \cite{Hendrey18}, the gonality of the fan graph was computed and shown to be bounded by $O(\sqrt{n})$. However, the scramble number of the fan graph had not been computed. To this end, we compute the scramble number of the fan graph, later generalizing this approach to bound the scramble number of all outerplanar graphs. Doing this, we show that for outerplanar graphs, the scramble number is also bounded by $O(\sqrt{n})$. In particular, we give an explicit bound.
\begin{theorem}
    Let $G$ be an outerplanar graph on $n$ vertices, then $\sn(G)\leq (2+\sqrt{2})\sqrt{n}+4.$
\end{theorem}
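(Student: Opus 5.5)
The plan is to fix a threshold $d$ (to be optimized, of order $\sqrt{n}$) and split the vertices into the set $H=\{v : \deg(v)\ge d\}$ of high-degree vertices and the rest. Recall that the order of a scramble $\scramble$ is $\min\{h(\scramble), e(\scramble)\}$. Here $h(\scramble)$ is the minimum size of a set meeting every egg, and $e(\scramble)$ is the minimum size of an edge cut separating two disjoint eggs. To bound $\sn(G)$ from above, I will take an arbitrary scramble $\scramble$ on $G$ and show that either $h(\scramble)$ or $e(\scramble)$ is at most $(2+\sqrt{2})\sqrt{n}+4$.

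\textbf{Step 1 (few high-degree vertices).} An outerplanar graph on $n$ vertices has at most $2n-3$ edges, so $\sum_v \deg(v) < 4n$ and hence $|H| < 4n/d$. I may refine this using outerplanarity to get roughly $|H|\lesssim 2n/d$; I expect this refinement to drive the constant. The argument goes as follows. Every egg meeting $H$ is hit by $H$, so $h(\scramble)\le |H| + h(\scramble')$, where $\scramble'$ is the set of eggs disjoint from $H$. Each egg of $\scramble'$ is connected in $G-H$, so $\scramble'$ is a scramble on $G-H$, which is outerplanar with maximum degree less than $d$ in $G$.

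\textbf{Step 2 (separating eggs that avoid $H$).} If $h(\scramble')$ is small, say at most $c$ for a small absolute constant $c$ accounting for the ``$+4$'', we get $h(\scramble)\le |H|+c$. Otherwise I will use the structure of outerplanar graphs: they have treewidth $2$, and every $2$-connected block is a Hamiltonian cycle with noncrossing chords. Hence any two sufficiently far apart pieces of $G$ are separated by a set of at most $2$ vertices, namely a chord or a pair of cut vertices. The idea is to pick, via a tree decomposition of width $2$, a bag $\{x,y,z\}$ that is ``central'' for $\scramble'$. If $\scramble'$ cannot be hit by the bag plus a few vertices, then two eggs $A,B\in\scramble'$ lie in different components of $G$ minus a separator $S$ of at most $2$ (or $3$) vertices.

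If every vertex of $S$ is outside $H$, then deleting all edges incident to $S$ on one side gives an edge cut separating $A$ from $B$ of size at most about $2d$. This is the analogue of the bound $\sn\le 2\Delta$ from \cite{2026sizecomplexityscrambles}. If instead some separator vertex lies in $H$, I will add it to the hitting set and iterate. Since each iteration uses up a vertex of $H$, the total added to the hitting set is at most $|H|$ plus a constant.

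\textbf{Step 3 (optimization).} Combining the steps, every scramble has order at most $\max\{|H|+c,\ 2d+c'\}$, and in the worst case something like $2n/d + 2d + 4$. Choosing $d$ to balance the terms, e.g.\ $d\approx\sqrt{n/2}$ or $d\approx \sqrt{n}$ depending on the exact form of the count in Step 1, should yield the stated bound $(2+\sqrt{2})\sqrt{n}+4$.

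\textbf{Main obstacle.} The delicate step is Step 2: lifting the separation from $G-H$ back to $G$. An edge cut in $G-H$ does not separate eggs in $G$, because high-degree vertices can reconnect the two sides through many edges. My first attempt will be to handle this with the fan graph as a model case. There, the single apex is put in the hitting set, and the path is then cut at a low-degree vertex so that only $O(d)$ edges go to the apex side. I would then generalize by arguing inside each block using the outer Hamiltonian cycle: choose the separating pair on the cycle so that its vertices avoid $H$, at the cost of absorbing into the hitting set those $H$-vertices adjacent to both sides.
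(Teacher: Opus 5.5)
There is a genuine gap in Step 2, in exactly the case you flag as the main obstacle, and ``add it to the hitting set and iterate'' does not close it. After Step 1 every vertex of $H$ is already in your hitting set, so adding a separator vertex $h\in H$ makes no progress. More fundamentally, putting $h$ in the hitting set does nothing for the egg-cut alternative: an egg-cut isolating a component $C$ must still delete all $|N(h)\cap C|$ edges from $h$ into $C$. In general there is also no small separator avoiding $H$.

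Take $F_m$ with the scramble consisting of $\{v_0\}$ and $\lfloor\sqrt m\rfloor$ disjoint subpaths of length $\lfloor\sqrt m\rfloor$. Then $H=\{v_0\}$, and $h(\mathcal{S}')=\lfloor\sqrt m\rfloor$ is not bounded by a constant. Every vertex set separating two path-eggs contains $v_0$. So your dichotomy produces neither a hitting set of size $|H|+c$ nor a cut through a low-degree separator. The small egg-cut that does exist comes from the egg being \emph{short}, so that few apex edges enter it. Hence the bound ``$|H|$ plus a constant'' in Step 3 is unjustified. The optimization there, which also switches between a max and a sum, cannot produce the constant $2+\sqrt2$.

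The missing idea is the paper's Fan Process lemma. For a high-degree separator vertex $v$ of a maximal outerplanar graph, $G[N(v)\cup\{v\}]\cong F_{\deg(v)}$. One adds to the hitting set extra spacer neighbours of $v$, roughly every $\sqrt{2n}$ steps along this path. Each spacer is chosen within a window of width about $\sqrt n$ to have degree below $\sqrt n+4$, which is possible since fewer than $\sqrt n$ vertices have degree at least $\sqrt n+4$. A component between consecutive spacers then receives at most about $\sqrt{2n}+\sqrt n$ edges from $v$, plus a bounded share of the edges at the two flanking low-degree spacers. Balancing these three contributions is what produces $(2+\sqrt2)\sqrt n+4$.

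This costs about $\deg(v)/\sqrt{2n}$ hitting-set vertices per high-degree vertex used, a cost your accounting omits. The paper controls the total by never having more than two such sets active at once. These are $H_1$ and $H_2$, each of size at most $2\lfloor\Delta/\sqrt{2n}\rfloor$. It recurses into a single component via balanced two-vertex separators rather than handling all of $H$ globally. Your fan-graph remark points in this direction, but ``cut at a low-degree vertex'' must become cutting at $\Theta(\deg(v)/\sqrt n)$ of them. Absorbing $H$-vertices adjacent to both sides does not help, for the reason above.
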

We later compute the scramble number of the wheel graph, extending our bound on the scramble number to ``near outerplanar graphs".
\subsection*{Outline} In Section \ref{Sec:Background} we introduce all definitions and notation regarding outerplanar graphs, treewidth and scramble number. In Section \ref{Subsec:Fan graph} we compute the scramble number of the fan graph as a primer for bounding the scramble number of outerplanar graphs. Then, in Section \ref{Subsec:Outerplanar properties} we establish all necessary Lemmas and properties required to prove our main theorem, which is proven in Section \ref{Subsec:main theorem}. we prove that the scramble number. In Section \ref{Sec:Near Outerplanar} we define the notion of ``$k$-nearly outerplanar graphs", computing the scramble number of the wheel graph and extend our main result to ``$k$-nearly outerplanar graphs". Finally, in Section \ref{Sec: Open Questions} we state some future directions and open questions related to the work done in this paper.

\subsection*{Acknowledgments} Thanks to Dave Jensen for his support and insight, and to Ralph Morrison for helpful comments and feedback on earlier drafts.

\section{Background}
\label{Sec:Background}
\subsection{Outerplanar graphs}
Throughout, we assume the graphs are simple, connected, and have $n$ vertices. We denote by $\deg(v)$ the degree of a vertex $v$, that is, the number of edges connected to $v$. We now define the notions of a graph being planar or outerplanar.
\begin{definition}
    A graph is \emph{planar} if it may be drawn in the plane, such that no edges intersect. We call such a valid drawing an \emph{embedding} into the plane.
\end{definition}
Given an embedding of a planar graph, the connected regions enclosed by vertices and edges are called \emph{faces}. 
\begin{definition}
    A planar graph $G$ is \emph{outerplanar} if it may be embedded into the plane such that all of its vertices lie on the same unbounded face.
\end{definition}
We remark that for the graph to be planar or outerplanar, it suffices for there to exist an embedding. That is, it is possible to find other ways to draw the graph that don't meet the conditions in the definition. See Figure \ref{Fig:Outer} for examples of a graph drawn such that it is embedded either planarly or outerplanarly. 
\begin{figure}[h]
\begin{tikzpicture}[scale=2.0]

\draw [ball color=black] (-1,0) circle (0.55mm);
\draw [ball color=black] (-1,2) circle (0.55mm);
\draw [ball color=black] (0,1) circle (0.55mm);
\draw [ball color=black] (1,0) circle (0.55mm);
\draw [ball color=black] (1,2) circle (0.55mm);
\draw [ball color=black] (4,0) circle (0.55mm);
\draw [ball color=black] (4,2) circle (0.55mm);
\draw [ball color=black] (3,1) circle (0.55mm);
\draw [ball color=black] (6,0) circle (0.55mm);
\draw [ball color=black] (6,2) circle (0.55mm);

\draw (-1,0)--(-1,2);
\draw (-1,0)--(1,0);
\draw (1,0)--(1,2);
\draw (1,2)--(-1,2);
\draw (-1,0)--(0,1);
\draw (-1,2)--(0,1);

\draw (4,0)--(4,2);
\draw (4,0)--(6,0);
\draw (6,0)--(6,2);
\draw (6,2)--(4,2);
\draw (4,0)--(3,1);
\draw (4,2)--(3,1);

\end{tikzpicture}
\caption{A graph embedded planarly but not outerplanarly on the left. Then redrawn outerplanarly on the right.}
\label{Fig:Outer}
\end{figure}

For a given graph $G$, a \emph{minor} is a subgraph that may be obtained via any sequence of the following operations: deleting vertices, deleting edges, or contracting edges. Planar graphs admit a characterization via excluded minors. That is, by Kuratowski's theorem, a finite graph is planar if and only if it does not contain $K_5$ or $K_{3,3}$ as a minor. In a similar manner, there is an excluded minor characterization for outerplanar graphs.
Outerplanar graphs were characterized first by Chartrand and Harary in \cite{chartrand1967planar}. 
\begin{theorem}[\cite{chartrand1967planar}]
    A graph $G$ is outerplanar if and only if it does not contain $K_4$ or $K_{2,3}$ as a minor.
\end{theorem}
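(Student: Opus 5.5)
The plan is to reduce the statement to Kuratowski's theorem (quoted above) by adding an apex vertex. For a graph $G$, let $G^+$ be the graph obtained by adding a new vertex $v$ adjacent to every vertex of $G$. The argument has three steps: first show that $G$ is outerplanar if and only if $G^+$ is planar; then translate $K_5$ and $K_{3,3}$ minors of $G^+$ into $K_4$ and $K_{2,3}$ minors of $G$; finally assemble the two directions.

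\textbf{Step 1: $G$ is outerplanar if and only if $G^+$ is planar.}
\begin{enumerate}
\item Suppose $G$ has an outerplanar embedding. Place $v$ inside the unbounded face. Since every vertex of $G$ lies on the boundary of that face, each vertex can be joined to $v$ by a curve inside the face without crossings, so $G^+$ is planar.
\item Conversely, suppose $G^+$ is planar. Choose a planar embedding, and re-embed if needed (for instance by stereographic projection) so that a face incident to $v$ is unbounded; in fact any face works. Delete $v$ and its incident edges. The faces around $v$ merge into one face of the induced embedding of $G$. Every vertex of $G$ was joined to $v$, so every vertex lies on the boundary of this merged face.
\item Making the merged face the unbounded one, again by stereographic projection, gives an outerplanar embedding of $G$.
\end{enumerate}

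\textbf{Step 2: translating minors.} Write a minor of $G^+$ via disjoint connected branch sets $B_1,\dots,B_k$, with an edge of $G^+$ between $B_i$ and $B_j$ whenever the corresponding vertices of the minor are adjacent.
\begin{enumerate}
\item Suppose $G^+$ has a $K_5$ minor. If $v$ lies in no branch set, then $G$ itself has a $K_5$ minor, and hence a $K_4$ minor. If $v\in B_1$, then $B_2,\dots,B_5$ lie in $G$, are connected in $G$, and are pairwise joined by edges not incident to $v$. These are edges of $G$, so they witness a $K_4$ minor in $G$.
\item Suppose $G^+$ has a $K_{3,3}$ minor. The same argument applies: discard the branch set containing $v$, or any branch set if $v$ is in none. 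The remaining five branch sets witness $K_{3,3}$ minus a vertex, which is $K_{2,3}$, as a minor of $G$.
\end{enumerate}

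\textbf{Step 3: assembling the directions.}
\begin{enumerate}
\item Suppose $G$ has no $K_4$ or $K_{2,3}$ minor. By Step 2, $G^+$ has no $K_5$ or $K_{3,3}$ minor. Kuratowski's theorem then says $G^+$ is planar, and Step 1 says $G$ is outerplanar.
\item For the other direction, note that outerplanarity is closed under taking minors. Deleting vertices or edges clearly preserves an outerplanar embedding. Contracting an edge also preserves it, since the contracted vertex still lies on the outer face.
\item It therefore suffices to check that $K_4$ and $K_{2,3}$ are not outerplanar. This follows from Step 1, because $K_4^+=K_5$ is not planar, and $K_{2,3}^+$ contains $K_{3,3}$ (the apex joins the part of size $2$). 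Both facts use Kuratowski's theorem.
\end{enumerate}

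The main obstacle is the topological Step 1. In particular, one must justify rigorously that after deleting $v$ all of its former neighbours lie on a single face, and that contraction preserves outer-face incidence. I would handle both with the standard fact that the faces around a vertex merge when that vertex is deleted. The minor bookkeeping in Step 2 is routine.
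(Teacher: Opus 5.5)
Your proof is correct, and it is the standard apex-graph argument. The paper gives no proof of this statement; it quotes it from Chartrand and Harary as background, so the only comparison is with the citation. The practical difference is that your argument is self-contained relative to the minor form of Kuratowski's theorem (strictly, Wagner's theorem) that the paper states just before, at the cost of the small topological lemma in Step 1. Chartrand and Harary phrase the characterization in terms of subgraphs homeomorphic to $K_4$ or $K_{2,3}$. Since both graphs have maximum degree $3$, that version and the minor version are equivalent, and your argument gives the minor version the paper uses directly.

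Two spots can be tightened. In Step 1, rather than appealing to faces merging, note that $v$ together with the interiors of its incident edges is a connected set disjoint from the drawing of $G$. It therefore lies in a single face $f$ of $G$, and every vertex of $G$ is an endpoint of one of these arcs, so it lies on $\partial f$. In Step 3 you can bypass the topological claim about contraction altogether. If $H$ is a minor of $G$, then $H^+$ is a minor of $G^+$ (add $\{v\}$ as an extra branch set). So a $K_4$ minor of $G$ yields a $K_5$ minor of $G^+$, and a $K_{2,3}$ minor of $G$ yields a $K_{2,3}^+$ minor, which contains $K_{3,3}$. Either way $G^+$ is non-planar, and Step 1 shows that $G$ is not outerplanar.
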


\subsection{Treewidth and Separators}

It is known that a graph has treewidth at most $2$ if and only if it does not contain $K_4$ as a minor \cite{DUFFIN1965303}.
As a consequence, outerplanar graphs are a family of graphs with treewidth at most $2$. However, there exist graphs such as $K_{2,3}$ with treewidth $2$ that are not outerplanar.
Below we define treewidth in terms of brambles and bramble number, as it better relates to the scramble number definition seen in the next section.
\\
\begin{definition}
    A \emph{bramble} $\mathcal{B} = \{B_1, \ldots, B_m\}$ is a collection of connected subgraphs of $G$ that all pairwise ``touch". We say two subgraphs $B_i$ and $B_j$ ``touch" if either:
    \begin{enumerate}
        \item $B_i\cap B_j \neq \emptyset$
        \item There exists an edge in $G$ between a vertex in $B_i$ and a vertex in $B_j$.
    \end{enumerate}
\end{definition}
\begin{definition}
    A \emph{hitting set} for a bramble is a set of vertices of $G$ that has nonempty intersection with each of the subgraphs.
\end{definition}
The order of a bramble $|\mathcal{B}|$ is the smallest size of a hitting set. 
\begin{definition}
    The \emph{bramble number} of $G$, $\text{bn}(G)$, is the largest order of a bramble in $G$.
    \\
    The \emph{treewidth}, $\tw(G)$ is defined as one less than the bramble number of the graph. That is, $\tw(G) = \text{bn}(G)-1$.
\end{definition}

This definition via brambles is often used in the context of chip firing. However, in the setting of studying planar graphs, their treewidth has been primarily studied through the lens of balanced separators.
\begin{definition}
    A \emph{separator} is a set of vertices such that the removal of the vertices disconnects the graph into at least $2$ distinct non-empty components. We say a separator is \emph{balanced} if all disconnected components $C_i$ satisfy $|C_i|\leq \frac{2}{3}|V|$.
\end{definition}

\begin{definition}
    The separator number of a graph $G$, denoted $\sep(G)$, is the smallest $k$ such that for all subgraphs of $G$ there exists a balanced separator of size at most $k$.
\end{definition}

We now discuss the relation between separator number and treewidth. The following statement was remarked in \cite{chudnovsky2025coarsebalancedseparatorstreedecompositions} to follow from standard arguments.
\begin{lemma}\cite{chudnovsky2025coarsebalancedseparatorstreedecompositions}
    Let $G$ be any graph, $\sep(G) \leq \tw(G)+1 $. 
\end{lemma}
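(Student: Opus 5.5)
We prove $\sep(G)\le \tw(G)+1$ using a tree decomposition of minimum width. The bramble definition is not convenient here, so we invoke the standard duality theorem of Seymour and Thomas: $\tw(G)$ as defined through brambles equals the minimum width of a tree decomposition. Width is the largest bag size minus one. Treewidth does not increase under taking subgraphs, since restricting every bag of a decomposition of $G$ to $V(H)$ gives a decomposition of a subgraph $H$. So it suffices to show the following: every graph $H$ with a tree decomposition $(T,\{X_t\})$ whose bags have size at most $k+1$ has a balanced separator of size at most $k+1$. Here $k=\tw(G)$, and we apply this to each subgraph $H$ of $G$.

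\textbf{Finding a central bag.} Let $N=|V(H)|$. Orient each edge $st$ of $T$ as follows. Removing $st$ splits $T$ into subtrees $T_s\ni s$ and $T_t\ni t$. Write $V_s$ for the set of vertices of $H$ that appear only in bags of $T_s$, and define $V_t$ similarly. We orient $st$ towards $t$ if $|V_t|>\frac{2}{3}N$, and towards $s$ if $|V_s|>\frac{2}{3}N$. The sets $V_s$ and $V_t$ are disjoint, so at most one of these holds. If neither holds, leave the edge unoriented.

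Every edge has at most one direction and $T$ is finite and acyclic. Following oriented edges from any node therefore never revisits a node, so it terminates at a node $t^\ast$ with no outgoing edge.

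\textbf{Checking the separator.} Set $S=X_{t^\ast}$, so $|S|\le k+1$. For each neighbor $u$ of $t^\ast$, let $T_u$ be the component of $T-t^\ast$ containing $u$. Each vertex of $H-S$ lies only in bags of a single such component, by the connectivity axiom of tree decompositions. Two vertices in different components are never adjacent, because every edge of $H$ lies in some bag. Hence each component of $H-S$ is contained in some $V_u$, where $V_u$ is the set of vertices appearing only in bags of $T_u$. Since the edge $t^\ast u$ is not oriented away from $t^\ast$, we have $|V_u|\le \frac{2}{3}N$.

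\textbf{Degenerate cases.} The main obstacle is mostly definitional. The paper's definition of a separator demands at least two nonempty components, so the case where $H-S$ has fewer than two components needs care. If $H-S$ is empty or connected, we add vertices to $S$ or pick a different cut. Alternatively, we adopt the standard convention that a balanced separator only requires components of size at most $\frac{2}{3}N$, which is presumably the intended reading in the cited remark. Under that convention the argument above is complete.
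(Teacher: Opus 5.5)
The paper does not prove this lemma. It only cites it and remarks that it follows from standard arguments. Your proof is exactly that standard argument, and it is correct. You restrict a minimum-width tree decomposition to the subgraph $H$, orient each tree edge toward a side whose private vertex set exceeds $\frac{2}{3}N$, and take the bag $X_{t^\ast}$ at a sink. The disjointness of $V_s$ and $V_t$ gives each edge at most one orientation, so the walk never backtracks and must stop in the finite tree. The connectivity and edge-covering axioms then place every component of $H-X_{t^\ast}$ inside a single $V_u$ with $|V_u|\le \frac{2}{3}N$. The same argument with threshold $\frac{1}{2}N$ even gives components of size at most $N/2$.

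On the degenerate case, you should be firmer: the standard convention is not merely the ``presumably intended'' reading, it is forced. Under the paper's literal definition, a separator must leave at least two nonempty components. Then $K_n$ has no separator at all, since deleting vertices from $K_n$ leaves a complete graph or nothing. Likewise no one-vertex subgraph of any graph has a separator. So $\sep(G)$ would be undefined for every graph, and the inequality fails for $G=K_n$, where $\tw(G)+1=n$. In particular, your first suggestion, to ``add vertices to $S$ or pick a different cut'' when $H-S$ is empty or connected, cannot work in general and should be dropped. Instead, state the convention explicitly: a balanced separator is any $S\subseteq V(H)$, including $S=V(H)$, such that every component of $H-S$ has at most $\frac{2}{3}|V(H)|$ vertices. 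With that convention your argument is a complete proof.
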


\begin{theorem}\cite{ROBERTSON}
     For every graph $G$, $\tw(G) \leq 4\sep(G)$.
\end{theorem}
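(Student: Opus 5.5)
The plan is to prove the contrapositive form of a standard recursive construction: assuming every subgraph of $G$ has a balanced separator of size at most $k=\sep(G)$, I will build a tree decomposition of $G$ of width at most $4k$. The construction rests on the following inductive claim.

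\emph{Claim.} For every induced subgraph $H$ of $G$ and every $W\subseteq V(H)$ with $|W|\le 3k+1$, the graph $H$ has a tree decomposition of width at most $4k$ whose root bag contains $W$.

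Applying the claim with $H=G$ and $W=\emptyset$ gives $\tw(G)\le 4k=4\sep(G)$. The induction is on $|V(H)|$. If $|V(H)|\le 4k+1$, a single bag suffices. If $|W|<3k+1$, first enlarge $W$ by arbitrary vertices of $H$ until $|W|=3k+1$; this is possible because $|V(H)|>4k+1$.

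For the inductive step, I will find a set $X\subseteq V(H)$ with $|X|\le k$ such that every component $C$ of $H-X$ satisfies $|C\cap W|\le \tfrac{2}{3}|W|$. For each component $C$, set $H_C=H[C\cup X]$ and $W_C=(W\cap C)\cup X$. Then
\[
|W_C|\le \tfrac{2}{3}(3k+1)+k\le 3k+1 ,
\]
after a routine check of the rounding. The graph $H_C$ is strictly smaller than $H$, because $W$ cannot lie entirely inside one component. Induction then gives decompositions $T_C$ of width at most $4k$ with $W_C$ in their root bags. I attach all the $T_C$ as children of a new root bag $W\cup X$, which has size at most $4k+1$. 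This is a valid tree decomposition: every edge of $H$ lies inside some $H_C$, and each vertex of $X$ or $W$ appears in a connected subtree because it lies in the new root and in the root of each $T_C$ that contains it.

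The main obstacle is producing $X$. The hypothesis $\sep(G)=k$ gives separators that balance the \emph{number of vertices}, but the recursion needs separators that balance the \emph{weight on $W$}. My first attempt is to apply the hypothesis to a cleverly chosen subgraph rather than to $H$ itself. Suppose, for contradiction, that no $W$-balanced separator of size at most $k$ exists. Then for each $Y\subseteq V(H)$ with $|Y|\le k$, there is a unique component $\beta(Y)$ of $H-Y$ carrying more than $\tfrac{2}{3}|W|$ of $W$. The sets $\beta(Y)$ pairwise intersect, since any two each hold more than two thirds of $W$. So they form a bramble of order greater than $k$, and this makes $W$ highly linked. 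Using Menger's theorem, I would then select a subgraph $H'$: the union of $W$ with many disjoint paths between parts of $W$, with the paths padded or subdivided so that vertex counts in $H'$ track $W$-weight proportionally. A vertex-balanced separator of $H'$ of size at most $k$ would then induce a $W$-balanced separator of $H$, which gives the contradiction.

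The constants in the padding step are where I expect the real work. If they do not close to exactly $4$, I would fall back on the weaker form of the claim: use a $W$-balancing threshold of $\tfrac12$ instead of $\tfrac23$, and adjust the bound on $|W|$ accordingly.
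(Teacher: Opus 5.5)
\textbf{There is a genuine gap: the step that produces $X$ is the whole content of the statement, and your sketch of it does not work.}

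Your recursive framework is sound. It is the standard Robertson--Seymour construction. With $|W|=3k+1$ and a set $X$, $|X|\le k$, such that every component of $H-X$ meets at most $\frac23|W|$ vertices of $W$, you get $|W_C|\le 2k+k=3k$ and a root bag of size at most $4k+1$, and the gluing is valid. But it needs separators that are balanced with respect to $W$. The hypothesis $\sep(G)\le k$ only promises separators that balance the \emph{number of vertices} of a subgraph, and you never carry out the conversion between the two.

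The mechanism you sketch fails for two reasons.
\begin{enumerate}
\item The hypothesis applies only to subgraphs of $G$, so you cannot subdivide or pad the linking paths. In a genuine subgraph $H'$, the vertex count is dominated by path vertices whose lengths are dictated by $G$. For example, if one linking path carries a large fraction of $V(H')$, two of its internal vertices cutting off a suitable middle segment already form a vertex-balanced separator of $H'$. That separator does nothing to separate $W$.
\item Even a good separator $X$ of $H'$ is not a separator of $H$. Components of $H'-X$ can be rejoined in $H-X$ through vertices and edges outside $H'$. So the big component $\beta(X)$ of $H-X$ may still hold more than $\frac23|W|$, and no contradiction arises.
\end{enumerate}
What you actually need is an $H'$ that is robust against every $X$ with $|X|\le k$ simultaneously: $\beta(X)$ must force a component of $H'-X$ with more than $\frac23|V(H')|$ vertices. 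The bramble $\{\beta(Y)\}$ only certifies $\tw(H)\ge k$ and does not supply such an $H'$. Switching the balancing threshold to $\frac12$ does not touch this issue.

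For context, the paper gives no proof and only cites Robertson--Seymour. Their $4k$-type bound is for the weighted notion, in which every $W\subseteq V$ admits a $W$-balanced separator of size at most $k$; for that notion your first half is already a complete proof. For the subgraph, vertex-count notion of $\sep$ used in this paper, a linear relation with treewidth was established only later, by Dvo\v{r}\'ak and Norin (2019). Their argument is considerably more delicate and gives the constant $15$. So the constant $4$ should not be expected to fall out of a routine padding step. To finish, you should either work with the weighted definition of $\sep$, or replace the padding sketch with a real argument that extracts $W$-balanced separators from vertex-balanced separators of subgraphs, accepting whatever constant it yields.
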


\subsection{Scramble Number}
Treewidth is a lower bound on the graph invariant known as gonality \cite{deBruynGijswijt}. To improve upon treewidth as a bound for gonality, Harp, Jackson, Jensen, and Speeter defined the \emph{scramble number} of a graph $G$ in \cite{HJJS}. 

\begin{definition}
    A \emph{scramble} $\scramble = \{\mathcal{E}_1, \ldots, \mathcal{E}_m\}$ is a collection of connected subgraphs of $G$ called \emph{eggs}. 
\end{definition}

The order of a bramble was computed by finding hitting sets. However, for scrambles we are concerned with two operations, ``hitting" and ``cutting."  The hitting size of a scramble is defined the same way as the hitting size of a bramble.
\begin{definition}
    A \emph{hitting set} for a scramble is a set of vertices of G that has nonempty intersection with each of the eggs.
     The \emph{hitting size} $h(\scramble)$ of a scramble $\scramble$ is the minimum size of a hitting set for $\scramble$.
\end{definition}

\begin{definition}
    A \emph{cut} is a set of edges that disconnect a graph.
\end{definition}

Cuts are commonly denoted by the set of vertices on either side of the cut. Let $A$ and $B$ be disjoint vertex sets, we make use of the following notation
$$C(A,B) := \left\{e\in E(G) \mid  e=\{a,b\}, a\in A, b\in B\right\}.$$

\begin{definition}
    An \emph{egg-cut} $C$ for a scramble $\scramble$ is a cut such that at least one $\mathcal{E} \in \scramble$ lies completely on each side of $C$.
    The \emph{egg-cut size} $e(\scramble)$ for a scramble $\scramble$ is the size of the smallest egg-cut for $\scramble$.
\end{definition}

Altogether, we can define the order of a scramble $\scramble$.

\begin{definition}
    The \emph{order} of a scramble $\scramble$ is $\|{\scramble}\| = \min\{h(\scramble), e(\scramble)\}$.
\end{definition}

Finally, we define the scramble number of a graph $G$.

\begin{definition}
    The \emph{scramble number} $\sn(G)$ of a graph $G$ is $$\sn(G) = \max\{\|{\scramble}\| : \scramble \text{ is a scramble on } G\}.$$
\end{definition}

\begin{lemma}[\cite{HJJS}{Theorem 1.1}]
\label{lem:sn <= gon}
    For any graph $G$, $\tw(G)\leq \sn(G) \leq \gon(G)$.
\end{lemma}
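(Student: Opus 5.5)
The statement has two inequalities. I would prove them separately: $\tw(G)\le\sn(G)$ by turning a bramble into a scramble, and $\sn(G)\le\gon(G)$ by a chip-firing argument with level-set firings.

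\textbf{Lower bound.} Let $\mathcal{B}$ be a bramble of order $k=\text{bn}(G)=\tw(G)+1$, and regard its elements as the eggs of a scramble $\scramble$. The hitting sets of $\scramble$ and of $\mathcal{B}$ are identical, so $h(\scramble)=k$. It remains to show $e(\scramble)\ge k-1$.

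Let $C=C(A,B)$ be an egg-cut with $|C|=c$, and let $\E_1\subseteq A$ and $\E_2\subseteq B$ be eggs. The idea is to convert $C$ into a hitting set of size at most $c+1$, which forces $c+1\ge k$. Each egg falls into one of three cases.
\begin{itemize}
\item An egg meeting both $A$ and $B$ is connected, so it uses an edge of $C$ and therefore contains both endpoints of that edge.
\item An egg contained in $B$ is disjoint from $\E_1$ but must touch it. So it contains the $B$-endpoint of a cut edge whose $A$-endpoint lies in $\E_1$.
\item Symmetrically, an egg contained in $A$ contains the $A$-endpoint of a cut edge whose $B$-endpoint lies in $\E_2$.
\end{itemize}
For each cut edge $\{a,b\}$ I choose $b$ when $a\in\E_1$ and $a$ otherwise. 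This set of at most $c$ vertices hits every crossing egg and every egg inside $B$.

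The eggs it may miss are those inside $A$ whose only contact with $B$ is through cut edges landing in $E(\E_1,\cdot)$. These eggs all touch $\E_1$. I expect this bookkeeping to be the main obstacle: one extra vertex must absorb all such eggs. My first attempt is to re-choose the orientation edge by edge, taking the $A$-endpoint when $a\in\E_1$ and $b\in\E_2$. If that fails, I would replace $\E_1$ by a minimal egg on the $A$-side, so that the missed eggs must actually intersect it, and then add one vertex of $\E_1$, using minimality to argue that this single vertex suffices. Either route would give $c\ge k-1=\tw(G)$, hence $\|\scramble\|\ge\tw(G)$.

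\textbf{Upper bound.} Let $D$ be an effective divisor of rank at least $1$ with $\deg D=\gon(G)=d$, and let $\scramble$ be any scramble. If $h(\scramble)\le d$ we are done, so assume $h(\scramble)>d$. Then $\operatorname{supp}(D)$, which has at most $d$ vertices, misses some egg $\E$; choose $v\in\E$. Because $D$ has rank at least $1$, there is an equivalent effective divisor $D'$ with $D'(v)\ge1$. The difference $D'-D$ is the principal divisor of a firing script $\sigma$.

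I would decompose $\sigma$ into level-set firings $U_1\supseteq U_2\supseteq\cdots$, where each $U_i$ is a superlevel set of $\sigma$. Each step keeps the divisor effective. Firing $U_i$ sends exactly $|C(U_i,U_i^c)|$ chips across the cut, so $|C(U_i,U_i^c)|\le d$.

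Chips start outside $\E$ and at least one ends inside it. Meanwhile some egg keeps its chips throughout, or is entered from outside; this is where $h(\scramble)>d$ is used, since the at most $d$ chips cannot occupy every egg at every stage. Hence at some stage the cut $C(U_i,U_i^c)$ has an egg on each side. That makes it an egg-cut of size at most $d$, so $e(\scramble)\le d$.

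Choosing the stage correctly is the delicate point here. I would follow the standard argument that tracks the first firing after which a chip enters $\E$.
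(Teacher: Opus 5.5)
The paper gives no proof of this lemma; it quotes it from \cite{HJJS}. So the question is whether your sketch stands on its own. In both halves, the step you flag as delicate is never supplied, and the versions you sketch fail. \textbf{Upper bound.} The inference ``hence at some stage the cut $C(U_i,U_i^c)$ has an egg on each side'' does not follow for an arbitrary chip-free egg $\E$. Tracking the first firing that puts a chip on $\E$ does not rescue it, as the following example shows.

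Let $G$ be $K_5$ on $\{x_1,x_2,x_3,x_4,u\}$ with a leaf $p$ attached to $u$, and let $D=x_1+x_2+x_3+p$. Firing $\{p\}$ and then $V\setminus\{x_4\}$ shows that $D$ has rank at least $1$, and $\deg D=4=\gon(G)$. The scramble of singletons $\{x_1\},\dots,\{x_4\},\{u\}$ has $h=5>4$. Take $\E=\{u\}$ and $v=u$. The $u$-reduced divisor $D'=x_1+x_2+x_3+u$ is reached by the single firing of $\{p\}$. That cut has size $1$, with no egg on the $\{p\}$ side. Every hypothesis you use holds, yet no cut in the sequence is an egg-cut; the egg-cut $\delta(\{x_4\})$ appears only if you track $\{x_4\}$. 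So positive rank must be used at more than one vertex.

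One way to do this:
\begin{itemize}
\item Burn from $v$ by Dhar's algorithm. The chip-free egg $\E\ni v$ burns entirely. If the unburnt set $U$ contains an egg, you have an egg-cut of size at most $d$.
\item Otherwise fire $U$. A chip-free egg of the new divisor cannot straddle $\delta(U)$, because the outer endpoint of a crossing edge has just gained a chip. It also cannot lie inside $U$. So it lies in $U^c$ and was already chip-free.
\item Hence the family of chip-free eggs never grows. When the tracked egg receives a chip, the family shrinks strictly, and you restart from a vertex of a remaining chip-free egg (one exists since $d<h$).
\item Strict shrinking happens only finitely often, and burning toward a fixed vertex terminates. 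So you end at a $v$-reduced divisor with no chip on $v$, contradicting rank at least $1$.
\end{itemize}
Separately, nested level sets must be fired smallest first. For example, on the path with edges $ab,bc$, going from $2a$ to $a+c$ by firing $\{\sigma\ge 1\}$ first makes $b$ negative.

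\textbf{Lower bound.} Route (1) can leave unhit an egg $Y\subseteq B$ whose only contact with $\E_1$ is an edge $ab$ with $a\in\E_1$ and $b\in\E_2$. Route (2) misdiagnoses the problem. Missed eggs always meet $\E_1$: each contains the $A$-endpoint of a cut edge, and being missed forces that endpoint into $\E_1$. What you need is one vertex common to all missed eggs, and inclusion-minimality of $\E_1$ does not provide it. For instance, take edges $a_1w,wa_2,a_1z,a_2y,zy,b_1b_2$ and cut edges $a_1b_1,a_2b_2$. The eggs $\E_1=\{a_1,w,a_2\}$, $X_1=\{a_1,z\}$, $X_2=\{a_2,y\}$, $\E_2=\{b_1,b_2\}$ form a bramble, and every $A$-side egg is inclusion-minimal. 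Yet relative to $\E_1$, both $X_1$ and $X_2$ are missed, and they are disjoint.

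The right choice is to take $\E_1$ so that the set $T(\E_1)$ of $A$-endpoints of cut edges lying in $\E_1$ is inclusion-minimal among eggs contained in $A$. A missed egg $X$ satisfies $T(X)\subseteq T(\E_1)$, hence $T(X)=T(\E_1)\neq\emptyset$. So a single vertex of $T(\E_1)$ hits all missed eggs. This gives a hitting set of size at most $c+1$, i.e.\ $c\ge\tw(G)$.
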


\section{Scramble number of outerplanar graphs}
\subsection{Scramble number of the fan graph}
\label{Subsec:Fan graph}
 The \emph{fan graph} $\mathcal{F}_m$ is the graph with $m+1$ vertices $v_0, \ldots, v_m$, and edges between $v_0$ and $v_i$ for all $i \geq 1$, and between $v_i$ and $v_j$ if $\vert i-j \vert = 1$ for all $i,j \geq 1$. See Figure~\ref{Fig:Fan} for an example of the fan graph and an example of a scramble on the graph.

\begin{figure}[h]
\begin{tikzpicture}[scale=2.0]
\filldraw[color=red, fill=white!5, very thick](0.5,0) circle (0.75);
\filldraw[color=magenta, fill=white!5, very thick](2.5,0) circle (0.75);
\filldraw[color=cyan, fill=white!5, very thick](4.5,0) circle (0.75);
\filldraw[color=olive, fill=white!5, very thick](2.5,2) circle (0.5);

\draw [ball color=black] (0,0) circle (0.55mm);
\draw [ball color=black] (1,0) circle (0.55mm);
\draw [ball color=black] (2,0) circle (0.55mm);
\draw [ball color=black] (3,0) circle (0.55mm);
\draw [ball color=black] (4,0) circle (0.55mm);
\draw [ball color=black] (5,0) circle (0.55mm);
\draw [ball color=black] (2.5,2) circle (0.55mm);
\draw (0,0)--(1,0);
\draw (1,0)--(2,0);
\draw (2,0)--(3,0);
\draw (3,0)--(4,0);
\draw (4,0)--(5,0);
\draw (2.5,2)--(0,0);
\draw (2.5,2)--(1,0);
\draw (2.5,2)--(2,0);
\draw (2.5,2)--(3,0);
\draw (2.5,2)--(4,0);
\draw (2.5,2)--(5,0);
\draw (2.5,2.25) node {{\small $v_0$}};
\draw (0,-0.25) node {{\small $v_1$}};
\draw (1,-0.25) node {{\small $v_2$}};
\draw (2,-0.25) node {{\small $v_3$}};
\draw (3,-0.25) node {{\small $v_4$}};
\draw (4,-0.25) node {{\small $v_5$}};
\draw (5,-0.25) node {{\small $v_6$}};

\end{tikzpicture}
\caption{The fan graph $\mathcal{F}_6$. Depicted is a scramble with four eggs $S =\{\{ v_0\},\{ v_1, v_2\},\{ v_3, v_4\},\{ v_5, v_6\}\}$. This is a scramble with $h(S) = 4$ and $e(S) = 3$.}
\label{Fig:Fan}
\end{figure}  

We now compute the scramble number of the fan graph. The formula will depend on how close $m$ is to $\lfloor \sqrt{m}\rfloor.$ We do this by representing $m$ uniquely as $k^2+c$ with $k$ a positive integer and $0\leq c\leq 2k$. The approach used to compute the scramble number of the fan graph serves as a primer for bounding the scramble number of outerplanar graphs.
\begin{lemma}\label{lem: fan gon}
    Let $F_m$ be the fan graph on $m+1$ vertices. If $m = k^2+c$ for some $k\in \Z_{>0}$ with $0\leq c\leq 2k$, then
    $$\sn(F_m) = \begin{cases}
        \left\lfloor\sqrt{m} \right\rfloor+1 & \text{if } c\leq k+1\\ 
        \left\lfloor\sqrt{m} \right\rfloor+2 & \text{otherwise }\\
    \end{cases}.$$
\end{lemma}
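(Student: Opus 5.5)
The plan is to prove matching lower and upper bounds. Throughout, write $s$ for the claimed value ($s=k+1$ if $c\le k+1$, and $s=k+2$ otherwise), and call $P=v_1\cdots v_m$ the path.

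First we record the cost of a cut. Every egg avoiding $v_0$ is a subpath of $P$. For any cut, let $S$ be the side not containing $v_0$. The cut contains the $|S|$ edges from $S$ to $v_0$. It also contains at least one path edge for each boundary point of each maximal interval of $S$ inside $P$; an interval ending at $v_1$ or $v_m$ has only one such boundary point. Consequently, if $S$ contains a whole egg that is an interval $I$, the cut has size at least $|I|+2$ when $I$ is interior to $P$ and at least $|I|+1$ when $I$ contains an endpoint of $P$. The extreme cut $S=P$ costs $m$.

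\textbf{Lower bound.} We take the scramble consisting of the egg $\{v_0\}$ together with $s-1$ consecutive disjoint blocks partitioning $P$. Since these $s$ eggs are pairwise disjoint, $h=s$.
\begin{itemize}
\item If $s=k+1$, the two end blocks get at least $k$ vertices and the interior blocks at least $k-1$, so every side $S$ containing a block costs at least $k+1$. This needs $m\ge 2k+(k-2)(k-1)=k^2-k+2$, which holds.
\item If $s=k+2$, the $k+1$ blocks are two end blocks of size $\ge k+1$ and interior blocks of size $\ge k$. This needs $m\ge k^2+k+2$, which is exactly $c\ge k+2$.
\end{itemize}
Any leftover vertices are absorbed into some block. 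Every egg-cut has a side $S$ avoiding $v_0$ that contains a whole block (or is $P$ itself), so $e\ge s$ and $\sn(F_m)\ge s$.

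\textbf{Upper bound.} Let $\mathcal S$ be an arbitrary scramble and suppose $e(\mathcal S)\ge s$; we must produce a hitting set of size at most $s$. The vertex $v_0$ hits every egg through $v_0$, so only interval eggs remain. Call an interval egg \emph{short} if its cut cost ($|I|+2$, or $|I|+1$ at an end) is at most $s-1$. A short egg disjoint from some other egg would give an egg-cut of size below $s$, so every short egg meets every other egg. By the Helly property for intervals, all short eggs then share a vertex $p$; moreover, $p$ can be chosen inside any one short egg, and every long egg meets that short egg. The long eggs have length at least $s-3$ in the interior and $s-2$ at the ends. I would hit them with evenly spaced vertices along $P$, starting at offset $s-2$ from each end.

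The main obstacle is making this count come out to at most $s-1$ vertices besides $v_0$, using $m\le k^2+2k$ and the dichotomy on $c$. The naive spacing gives roughly $m/(s-2)$ points, which is slightly too many. So I would sharpen the argument in two ways. First, spaced points near $p$ are redundant when a short egg exists, and $p$ can be slid to coincide with one of the spaced points. Second, if there is no short egg, then two long eggs that are disjoint and close together also yield a cheap cut, since we may take $S$ to be the union of one egg with the gap to a path end. This second observation lets the spacing be increased to about $s-1$ except near the ends. If this bookkeeping resists, the fallback is to compare with the gonality bound for fans from \cite{Hendrey18} via Lemma~\ref{lem:sn <= gon}, checking in which residue cases it already matches $s$.
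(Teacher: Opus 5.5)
Your lower bound is correct and is the paper's construction: $\{v_0\}$ plus consecutive blocks of the path, with your cut-cost count $|I|+2$ or $|I|+1$. It tacitly needs $m\ge s$ for the cut with $S=P$. That fails only at $m=1$, where the statement itself is false, because $F_1$ is a single edge and $\sn(F_1)=1$.

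The upper bound, however, is not established. The obstacle you hit is not bookkeeping; it comes from an off-by-one in your reduction. To get $\|\mathcal S\|\le s$ you only need to handle scrambles with $e(\mathcal S)\ge s+1$. You instead set out to show that $e(\mathcal S)\ge s$ forces $h(\mathcal S)\le s$, and that is false:
in $F_5$ ($k=2$, $c=1$, $s=3$), the scramble $\{v_0\},\{v_2\},\{v_3\},\{v_4\}$ has $e=3=s$ but $h=4$. More generally, for $c=k+1$ and $k\ge 2$ you can take $\{v_0\}$ with $k+2$ disjoint blocks (end blocks of $k$ vertices, interior blocks of $k-1$), giving $e=s$ and $h=s+2$. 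So no sliding of $p$ or respacing can rescue the count. (Even within your own threshold, long eggs have at least $s-2$ vertices in the interior and $s-1$ at an end, not $s-3$ and $s-2$.)

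Your ``second observation'' is also false. Adding the gap between an egg and a path end to $S$ adds one spoke to $v_0$ per added vertex while saving at most one path edge, so it is never cheaper than cutting out the egg alone. Finally, the gonality fallback cannot work beyond small cases, since $\gon(F_m)$ is roughly $2\sqrt m$, about twice $s$.

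With the correct threshold the argument closes quickly, and this is the paper's route.
\begin{enumerate}
\item Assume $e(\mathcal S)\ge s+1$. Any egg avoiding $v_0$ that is disjoint from some other egg must then be a subpath with at least $s-1$ vertices, or at least $s$ if it contains $v_1$ or $v_m$. Otherwise cutting out that egg alone is an egg-cut of size at most $s$.
\item Let $H=\{v_0\}\cup\{v_{jk+1}: 1\le j\le s-2\}$, so $|H|=s-1$. For $k\ge 2$ (the cases $m\le 3$ are immediate), the pieces of $P\setminus H$ are $v_1,\dots,v_k$, middle pieces with $k-1$ vertices, and a last piece containing $v_m$. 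The last piece has $k+c-1\le 2k$ vertices if $s=k+1$, or $c-1\le 2k-1$ vertices if $s=k+2$.
\item No such long egg fits in the first or middle pieces. Two disjoint long eggs in the last piece would need at least $2s-1$ of its vertices, since at most one of them contains $v_m$.
\item Hence any two eggs missed by $H$ intersect. By the Helly property for subpaths, which you already invoked, a single further vertex hits all of them, so $h(\mathcal S)\le s$.
\end{enumerate}
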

\begin{proof}
    We proceed by cases. For each case, we produce a scramble which yields the appropriate lower bound.
    \begin{enumerate}
        \item If $0\leq c\leq k+1$, define the scramble $\scramble =\{\E_0, \ldots, \E_k \}$ where $\E_0 = \{v_0\}$ and $\E_i = \{v_j\mid (i-1)k+1\leq j \leq ik\}$ for $1\leq i \leq k$. We note that we have selected $k+1$ many disjoint eggs. As such $h(S) = k+1$. Since $v_0$ is connected to every other vertex, any egg-cut requires that we disconnect an egg from the fan vertex, $v_0$. As such, let $C$ be any connected component of $G$ that does not contain $v_0$. We must at least remove the $|C|$ edges coming from $v_0$ to $C$. If $C$ is the entire path, then we remove $|C| = m$ many edges. If $C$ contains at most one vertex of degree $2$, then we must remove an additional edge to disconnect $C$ from the remaining path. Hence any cut requires at least $\min\{m, |C|+1\}$ edges. Altogether, we find that a minimal egg cut is given by removing the $k+1$ many edges coming out of the egg $\E_1$. As such, $e(S) = k+1$ and $\|\scramble\| = k+1$.

        \item If $k+2 \leq c \leq 2k$ define $\scramble =\{\E_0, \ldots, \E_{k+1} \}$ where $\E_0 = \{v_0\}$, $\E_1 = \{v_j \mid 1\leq j \leq k+1 \} $ $\E_{k+1} = \{v_j\mid k^2+2\leq j \leq k^2+k+2\}$  and $\E_i = \{v_j\mid (i-1)k+2\leq j \leq ik+1\}$ for $2\leq i \leq k$. 
        
        We note that we have selected $k+2$ many disjoint eggs. As such $h(S) = k+2$. We then note that all eggs require at least $k+2$ edges to disconnect them from the rest of the graph. Hence, $e(S) = k+2$ and $\|\scramble\| = k+2$. 
    \end{enumerate}
    Now, for each case, we shall consider an arbitrary scramble $\scramble$ and conclude that its order must be bounded.
    \begin{enumerate}
        \item If $0\leq c\leq k+1$, define
    $H =\{v_0 \}\cup\{v_{jk+1} \mid 1\leq j\leq k-1 \}$.
    If $H$ is a hitting set, then $\|\scramble\|\leq k$. Otherwise, we note that $H$ separates $F_m$ into disjoint path components $P_1, \ldots, P_k$. We now have the following cases:
    \begin{enumerate}
        \item If only one unhit egg remains or all remaining unhit eggs pairwise intersect with each other, then we may add one vertex to $H$ to make it a hitting set. Then $\|\scramble\|\leq h(\scramble) \leq |H|+1\leq k+1$
        \item If two unhit eggs are disjoint and lie in distinct path components, then at least one of the path components may be disconnected from the rest of the graph by the removal of at most $|H|+1$ many edges. This produces an egg cut and, as such, $\|\scramble\| \leq e(\scramble) \leq k+1$.
        \item If two unhit eggs are disjoint and both lie in the same path, we have two possibilities. If $c<k+1$, then by the pigeonhole principle, at least one of the eggs is of size less than or equal to $|H|-1$. As such, at least one egg may be disconnected from the rest of the graph by the removal of at most $|H|+1$ many edges.

        If $c=k+1$, the ``last" path component has $2k$ vertices, allowing for the possibility of two disjoint eggs of size $k$. However, in this scenario, one of the eggs must have a vertex of degree $2$, allowing for a cut of size $k+1$.
        Hence, $\|\scramble\| \leq e(\scramble)\leq k+1$.
    \end{enumerate}

    \item If $k+2\leq c\leq 2k$, then we repeat the argument above with $H =\{v_0\}\cup \{v_{jk+1} \mid 1\leq j\leq k \}$ and cases wether $c<2k$ or $c=2k$, to conclude that $\|\scramble\|\leq k+2$.
    \end{enumerate}

    Altogether, $$\sn(F_m) = \begin{cases}
        \left\lfloor\sqrt{m} \right\rfloor+1 & \text{if } c\leq k+1\\ 
        \left\lfloor\sqrt{m} \right\rfloor+2 & \text{otherwise }\\
    \end{cases}.$$
    
\end{proof}

Lemma \ref{lem: fan gon} implies that there exist outerplanar graphs with treewidth $2$ and scramble number $O(\sqrt{n})$. Additionally, in \cite{Hendrey18}, Hendrey showed that the fan graph has gonality roughly $2\lfloor n\rfloor$. This implies that the fan graphs are a family of graphs for which there is a gap between the scramble number and the gonality.  

\subsection{Properties of outerplanar graphs}
\label{Subsec:Outerplanar properties}
Throughout this section we lay out all of the necessary pieces to prove Theorem \ref{thm: main}. First, we remark that outerplanar graphs have a collection of edges which lie on the outside face. Each edge that does not lie on the outside face defines a separator (not necessarily balanced) for the graph, namely its two endpoints.

In order to have additional structure, we restrict to the case of maximal outerplanar graphs. That is, outerplanar graphs where the addition of any edge causes the graph to not be outerplanar. Since the scramble number is subgraph monotone \cite{HJJS}, it suffices to study only the maximal case. Maximal outerplanar graphs are known to be sparse, having exactly $2n-3$ edges. However, as shown in \cite{Bonnet}, there exist sparse graphs with linearly large treewidth and consequently linearly large scramble number. As such, beyond being sparse, we also make use of the fact that we can bound the number of high degree vertices.

\begin{lemma}\label{lem:sparse outerplanar}[ {\cite{DegsOuter}} Theorem 3.4]
    Let $G$ be an outerplanar graph with
    $$A_k = \{v\mid \deg(v)\geq k\}.$$
    If $k\geq 6$ then
    $$|A_k|\leq \left\lfloor \frac{n-6}{k-4}\right\rfloor.$$
\end{lemma}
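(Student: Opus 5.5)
We need to prove: for an outerplanar graph $G$ on $n$ vertices and $k\ge 6$, the set $A_k$ of vertices of degree at least $k$ satisfies $|A_k|\le \lfloor (n-6)/(k-4)\rfloor$. The plan is a double counting argument on edges, combining the global edge bound $|E|\le 2n-3$ with a structural bound on how many edges can run between high-degree vertices.

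\textbf{Reduction to the maximal case.} First I would pass to a maximal outerplanar supergraph $G'$ on the same vertex set. Degrees only increase, so $A_k(G)\subseteq A_k(G')$, and it suffices to bound $|A_k(G')|$. In $G'$ we have exactly $|E| = 2n-3$ for $n\ge 2$. Write $a=|A_k|$ and $B=V\setminus A_k$.

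\textbf{Double counting.} Summing degrees over $A_k$ gives
$$ka \le \sum_{v\in A_k}\deg(v) = 2e(A_k) + e(A_k,B),$$
where $e(A_k)$ counts edges inside $A_k$ and $e(A_k,B)$ counts edges between $A_k$ and $B$. Since $e(A_k)+e(A_k,B)\le |E| = 2n-3$, this yields
$$ka \le (2n-3) + e(A_k).$$

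\textbf{Bounding $e(A_k)$.} The induced subgraph $G'[A_k]$ is outerplanar, so $e(A_k)\le 2a-3$ once $a\ge 2$. Substituting gives $ka\le 2n-3+2a-3$, that is, $(k-2)a\le 2n-6$. This is weaker than the target $(k-4)a\le n-6$, so the crude count is not enough and a finer argument is needed.

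\textbf{Finer argument via the subgraph on $B$.} I would also count the edges inside $B$. The aim is to show that $B$ spans many edges, since $B$ has $n-a$ vertices and the outer cycle of $G'$ passes through all of them. Explicitly,
$$e(B) = |E| - \sum_{v\in A_k}\deg(v) + e(A_k) \le 2n-3-ka+2a-3.$$
Lower-bounding $e(B)$ requires structure. Deleting $A_k$ from the Hamiltonian outer cycle of $G'$ leaves at most $a$ paths (if $a\ge 1$), which cover $n-a$ vertices, so $e(B)\ge n-2a$. Substituting,
$$n-2a \le 2n-6-(k-2)a,$$
which rearranges to $(k-4)a\le n-6$. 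Taking floors gives $|A_k|\le \lfloor (n-6)/(k-4)\rfloor$.

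\textbf{Edge cases and the main obstacle.} The small cases need a separate check.
\begin{itemize}
\item If $a=0$ the claim is immediate.
\item If $a=1$, then $e(A_k)=0$ and the count gives $k\le 2n-3-(n-2)=n-1$. This yields $n-6\ge k-5$, but we need $k-4\le n-6$. So the $a=1$ case must use $k\le \deg\le n-1$ differently, or rely on the statement's hypotheses.
\end{itemize}
I expect these boundary adjustments, where $a\le 1$ or $e(A_k)<2a-3$ breaks the formulas, to be the main obstacle. The core inequality itself follows from the Hamiltonian outer cycle together with $|E|=2n-3$.
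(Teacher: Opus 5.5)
\textbf{Verdict: there is a genuine gap, and it cannot be closed, because the statement as written is false in the case $a\le 1$ that you flagged.}

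The paper gives no proof of this lemma (it is quoted from \cite{DegsOuter}), so your argument has to stand on its own. Its core is sound. For $a=|A_k|\ge 2$, note that $n\ge 7$, so a maximal outerplanar supergraph has $|E|=2n-3$ and an outer Hamiltonian cycle. The identity $e(B)=|E|-\sum_{v\in A_k}\deg(v)+e(A_k)$, combined with $e(A_k)\le 2a-3$ and $e(B)\ge n-2a$, gives $(k-4)a\le n-6$. This bound is even sharp: take two adjacent fan centres of degree $k$ with $n=2k-2$.

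For $a=1$ you need $k\le n-2$, and nothing forces this. The star $K_{1,6}$, or the maximal outerplanar fan $\mathcal{F}_6$, has $n=7$ and $|A_6|=1$, while $\lfloor (7-6)/(6-4)\rfloor=0$. More generally, any outerplanar graph on $n\ge 7$ vertices with a vertex of degree $n-1$ fails the bound for $k=n-1$. Your remark that $a=0$ is immediate is also wrong for $n\le 5$. There $|A_k|=0$, but $\lfloor (n-6)/(k-4)\rfloor\le -1$.

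So what is missing is a hypothesis, not an idea. Add an assumption such as $n\ge k+2$, which is presumably part of the cited theorem. Then $(n-6)/(k-4)\ge 1$, the cases $a\le 1$ become trivial, and your $a\ge 2$ argument completes the proof. Equivalently, your argument already proves the bound for every outerplanar graph with $|A_k|\ge 2$. The paper's later use of the lemma with $k=\sqrt{n}+4$ is unaffected, since there $a\le 1<\sqrt{n}$ directly.
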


As a corollary, we have that
\begin{corollary}\label{coll: vertex deg pre}
 Let $G$ be an outerplanar graph with
    $A_k = \{v\mid \deg(v)\geq k\}$ and $c>0$.
    If $n\geq \frac{4}{c^2}$ then
    $$|A_{ c\sqrt{n}+4}| < \frac{\sqrt{n}}{c}.$$
\end{corollary}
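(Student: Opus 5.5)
We apply Lemma \ref{lem:sparse outerplanar} with $k = c\sqrt{n}+4$. Since degrees are integers, a vertex has degree at least $c\sqrt{n}+4$ exactly when it has degree at least $k' := \lceil c\sqrt{n}\rceil + 4$. So $A_{c\sqrt{n}+4} = A_{k'}$, and we may work with the integer threshold $k'$.

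First we check that the lemma applies, i.e. that $k' \geq 6$. The hypothesis $n \geq \frac{4}{c^2}$ gives $c\sqrt{n} \geq 2$, hence $k' \geq c\sqrt{n}+4 \geq 6$. Lemma \ref{lem:sparse outerplanar} then yields
$$|A_{k'}| \leq \left\lfloor \frac{n-6}{k'-4}\right\rfloor \leq \frac{n-6}{k'-4}.$$

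Next we bound the right-hand side. Since $k'-4 = \lceil c\sqrt{n}\rceil \geq c\sqrt{n} > 0$, we get
$$\frac{n-6}{k'-4} \leq \frac{n-6}{c\sqrt{n}} < \frac{n}{c\sqrt{n}} = \frac{\sqrt{n}}{c}.$$
The strict inequality uses $n-6<n$ and positivity of the denominator. If $n<6$, the numerator is negative and the bound holds trivially, since then $|A_{k'}| \leq 0 < \sqrt{n}/c$.

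The only delicate point is the non-integer threshold $c\sqrt{n}+4$. We handle it by replacing the threshold with its ceiling, as above. Alternatively, one could observe that the cited proof of Lemma \ref{lem:sparse outerplanar} only uses $k \geq 6$ as a real bound. Apart from this, the argument is direct substitution.
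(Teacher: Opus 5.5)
Your proof is correct and follows the paper's argument: deduce $c\sqrt{n}+4\geq 6$ from $n\geq 4/c^2$, apply Lemma~\ref{lem:sparse outerplanar}, and bound $\frac{n-6}{c\sqrt{n}}<\frac{\sqrt{n}}{c}$. Your passage to the integer threshold $\lceil c\sqrt{n}\rceil+4$ and your split at $n=6$ are extra care that the paper skips; for $n<6$, though, justify $A_{k'}=\emptyset$ directly (every degree is at most $n-1\leq 4<6\leq k'$) rather than reading a negative bound off the lemma, since that bound cannot literally hold there.
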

 \begin{proof}
     If $n\geq \frac{4}{c^2}$, we have that $c\sqrt{n}+4\geq 6$. Altogether, by Lemma \ref{lem:sparse outerplanar}
     $$|A_{c\sqrt{n}+4}|\leq  \left\lfloor \frac{n-6}{c\sqrt{n}} \right\rfloor =\left\lfloor \frac{\sqrt{n}}{c}-  \frac{6}{ c\sqrt{n}} \right\rfloor <\frac{\sqrt{n}}{c}.$$
 \end{proof}
We remark that the choice of constant $c$ is determined to optimize the number of edges in the cuts we will produce in Lemma \ref{lem: Fan Process}. In particular, the optimal value is given by minimizing the expression $c+\frac{1}{c}$. As such, we make use of $c = 1$.
\begin{corollary}\label{coll: vertex deg}
 Let $G$ be an outerplanar graph with
    $A_k = \{v\mid \deg(v)\geq k\}.$
    If $n\geq 4$ then
    $$\left|A_{ \sqrt{n}+4}\right| < \sqrt{n}.$$
\end{corollary}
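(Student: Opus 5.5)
This is the specialization $c=1$ of Corollary~\ref{coll: vertex deg pre}, so the plan is to check that its hypotheses hold and then read off the conclusion. Setting $c=1$, the hypothesis $n\geq \frac{4}{c^2}$ becomes exactly $n\geq 4$, and the threshold degree $c\sqrt{n}+4$ becomes $\sqrt{n}+4$. The conclusion $|A_{c\sqrt{n}+4}|<\frac{\sqrt{n}}{c}$ then reads $|A_{\sqrt{n}+4}|<\sqrt{n}$, which is the claim.

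For a self-contained check, I would trace the argument directly. From $n\geq 4$ we get $\sqrt{n}\geq 2$, so $k=\sqrt{n}+4\geq 6$ and Lemma~\ref{lem:sparse outerplanar} applies. It gives
$$|A_{\sqrt{n}+4}|\leq \left\lfloor \frac{n-6}{\sqrt{n}}\right\rfloor=\left\lfloor \sqrt{n}-\frac{6}{\sqrt{n}}\right\rfloor<\sqrt{n},$$
and the last inequality is strict because $\frac{6}{\sqrt{n}}>0$.

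The only point needing care is that $k=\sqrt{n}+4$ may not be an integer, while Lemma~\ref{lem:sparse outerplanar} is stated for integer thresholds. Since degrees are integers, $A_{\sqrt{n}+4}=A_{\lceil\sqrt{n}+4\rceil}$. I would apply the lemma with $k'=\lceil \sqrt{n}+4\rceil\geq 6$. Because $k'-4\geq\sqrt{n}$, we get $\frac{n-6}{k'-4}\leq\frac{n-6}{\sqrt{n}}$ whenever $n\geq 6$, and the bound above goes through. For $n=4,5$ the numerator $n-6$ is negative, so the lemma gives $|A_{k'}|\leq\lfloor (n-6)/(k'-4)\rfloor<0$, which forces $A_{k'}=\emptyset$. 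This is also clear directly: every degree is at most $n-1<\sqrt{n}+4$. In both cases the bound holds, so I expect no real obstacle here.
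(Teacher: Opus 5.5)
Your proposal is correct and is the same argument as the paper, which obtains this corollary simply by taking $c=1$ in Corollary~\ref{coll: vertex deg pre}; that corollary's proof is exactly your direct application of Lemma~\ref{lem:sparse outerplanar}. Your extra care goes beyond what the paper checks: you handle the non-integer threshold via $\lceil\sqrt{n}+4\rceil$, and for $n=4,5$ your direct bound $\deg(v)\le n-1<\sqrt{n}+4$ is the valid justification, since there the lemma gives a negative right-hand side, which means it cannot be applied literally rather than that $A_{k'}$ is empty.
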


It will also be useful to use the ``local structure" of outerplanar graphs. To this end, we define the following:
\begin{definition}
    We denote the neighborhood of a vertex $v$ as follows:
    $$N(v):=\{ w\in V(G) \mid (v,w)\in E(G) \}.$$
\end{definition}

\begin{definition}
    Let $A\subseteq V(G)$, we denote by $G[A]$ the subgraph induced by $A$.
\end{definition}
Outerplanar graphs have additional structure which allows us to understand the graph induced by any vertex an its neighborhood. 

\begin{lemma}\label{lem: neigh is subfan}
    If $G$ is an outerplanar graph, then $G[N(v)\cup\{v\}]$ is isomorphic to a subgraph of $F_{\deg(v)}$.
\end{lemma}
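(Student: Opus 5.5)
The plan is to show that the graph induced on the neighborhood, $G[N(v)]$, is a \emph{linear forest}, that is, a disjoint union of paths. Once this is known, the statement follows directly. Let $d=\deg(v)$. List the path components of $G[N(v)]$ one after another, each in its path order, as $w_1,\ldots,w_d$. Map $v\mapsto v_0$ and $w_i\mapsto v_i$. Every edge $vw_i$ goes to the spoke $v_0v_i$. Every edge inside $N(v)$ joins two consecutive vertices of some component, so it goes to a rim edge $v_iv_{i+1}$. Hence $G[N(v)\cup\{v\}]$ is isomorphic to a subgraph of $F_d$; some rim edges of $F_d$ between consecutive components are simply unused.

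To prove that $G[N(v)]$ is a linear forest, I will use the excluded-minor characterization of outerplanar graphs \cite{chartrand1967planar}: $G$ has neither $K_4$ nor $K_{2,3}$ as a minor. Since $G[N(v)\cup\{v\}]$ is a subgraph of $G$, it suffices to rule out two configurations inside $N(v)$.

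First, $G[N(v)]$ contains no cycle. Suppose it contains a cycle $w_1w_2\cdots w_r w_1$ with $r\geq 3$. Contract the cycle edges until only a triangle $w_1w_2w_3$ remains, with $w_3$ absorbing $w_4,\ldots,w_r$. Vertex $v$ is adjacent to all of $w_1,w_2,w_3$, so this gives a $K_4$ minor, a contradiction.

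Second, every vertex of $G[N(v)]$ has degree at most $2$ there. Suppose some $u\in N(v)$ has three neighbors $a,b,c$ inside $N(v)$. Then $v$ and $u$ are both adjacent to each of $a,b,c$. Taking $\{v,u\}$ as one side and $\{a,b,c\}$ as the other gives a $K_{2,3}$ subgraph, a contradiction.

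A graph that is acyclic and has maximum degree at most $2$ is a disjoint union of paths, which completes the argument.

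There is no real obstacle here. The only points needing care are these. In the cycle case, the minor must be taken inside $G$, which is allowed because minors of subgraphs are minors of $G$. In the $K_{2,3}$ case, simplicity ensures that $a,b,c,u,v$ are distinct. In the final step, the vertex listing must place each component's vertices consecutively. Alternatively, one could argue geometrically: in an outerplanar embedding, the neighbors of $v$ appear in cyclic order around the outer face, and chords among them can only join consecutive neighbors. The minor argument is cleaner given the results already stated in the paper.
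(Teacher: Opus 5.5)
Your proof is correct, and its overall structure matches the paper's. The $K_{2,3}$ subgraph formed by $\{v,u\}$ and three common neighbours gives the degree bound; the paper phrases it as degree at most $3$ in $G[N(v)\cup\{v\}]$. Your final step, listing the path components consecutively and discarding the rim edges of $F_{\deg(v)}$ that join consecutive components, is exactly the paper's embedding.

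The difference is in how cycles are excluded. The paper appeals to the fact that every outerplanar graph has at least two vertices of degree less than $3$. From this it concludes that $G[N(v)]$ has maximum degree at most $2$ and at least two vertices of degree at most $1$, and hence is a union of paths. Those two properties alone do not force a linear forest: the disjoint union of an edge and a triangle satisfies both. So, as written, that step is incomplete. It can be repaired by applying the degree fact to each component $C$ separately, since a cycle component would make $G[C\cup\{v\}]$ a wheel of minimum degree $3$.

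Your direct argument, that a cycle in $N(v)$ together with $v$ contracts to $K_4$, handles every component at once. It is the cleaner justification of the one step the paper leaves loose.
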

\begin{proof}
    We map the vertices in $G[N(v)\cup\{v\}]$ to a subgraph of the fan graph with vertices $u_0$ through $u_{k}$ with $k = \deg(v)$. Note that $G$ outerplanar implies that $G[N(v)\cup\{v\}]$ is also outerplanar. We first show that all vertices in $N(v)$ have degree at most $3$ in $G[N(v)\cup\{v\}]$. For contradiction, assume that some $v_i$ is adjacent to $v, v_a, v_b,$ and $v_c$ where $v_a, v_b, v_c\in N(v)$. This leads to the $K_{2,3}$ minor pictured below, contradicting outerplanarity.
    \\
\[\begin{tikzcd}[cramped,sep=small]
	& v & \\
	{v_a} & {v_b} & {v_c} \\
	& {v_i}
	\arrow[no head, from=1-2, to=2-3]
	\arrow[no head, from=2-1, to=1-2]
	\arrow[no head, from=2-2, to=1-2]
	\arrow[no head, from=3-2, to=2-1]
	\arrow[no head, from=3-2, to=2-2]
	\arrow[no head, from=3-2, to=2-3]
\end{tikzcd}\]
    \\
    Every outerplanar graph has at least two vertices with degree strictly less than $3$. This then implies that all vertices in $G[N(v)]$ have degree at most $2$, and at least two vertices of degree at most $1$. As a consequence, $G[N(v)]$ is the union of paths $P^1, \ldots, P^m$ for some $1\leq m \leq k$. Altogether, we may conclude that $G[N(v)\cup \{v\}]$ is isomorphic to the subgraph of $F_k$ obtained by removing the edges that connect $P^j$ to $P^{j+1}$ for $1\leq j\leq m-1$.
    \end{proof}

\begin{corollary}\label{rem: fan}
    If $G$ is a maximal outerplanar graph, then $G[N(v)\cup\{v\}]$ is isomorphic to $F_{\deg(v)}$.
\end{corollary}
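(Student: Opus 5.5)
The plan is to sharpen Lemma~\ref{lem: neigh is subfan} by showing that when $G$ is maximal outerplanar, $G[N(v)]$ consists of a single path ($m=1$). In that case none of the edges of $F_{\deg(v)}$ joining consecutive paths $P^j, P^{j+1}$ were removed, so the subgraph produced in that lemma is all of $F_{\deg(v)}$. The small cases $n\le 2$ (or $\deg(v)\le 1$) are immediate, so I will assume $n\geq 3$.

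I will use the standard structure of maximal outerplanar graphs with $n\ge 3$. Fix an outerplanar embedding.
\begin{enumerate}
\item The boundary of the outer face is a Hamiltonian cycle $C$. The graph is $2$-connected, since otherwise an edge joining two blocks along the outer face could be added while keeping outerplanarity.
\item Every bounded face is a triangle. Otherwise a chord could be added inside a bounded face of length at least $4$, again keeping outerplanarity and contradicting maximality.
\end{enumerate}

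Now fix $v$ with $k=\deg(v)$. List its neighbors $u_1,\dots,u_k$ in the cyclic rotation order around $v$ in the embedding. Each consecutive pair $u_i,u_{i+1}$, indices taken cyclically, spans an angle at $v$, and each angle belongs to exactly one face incident to $v$. Since $v$ appears exactly once on the Hamiltonian outer cycle $C$, exactly one of these $k$ angles lies in the outer face. Label the neighbors so that this angle is the one between $u_k$ and $u_1$. Every other angle, between $u_i$ and $u_{i+1}$ for $1\le i\le k-1$, lies in a bounded face, which is a triangle containing $v, u_i, u_{i+1}$. Hence $u_iu_{i+1}\in E(G)$ for $1\le i\le k-1$, so $u_1u_2\cdots u_k$ is a path in $G[N(v)]$.

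By the proof of Lemma~\ref{lem: neigh is subfan}, $G[N(v)]$ is a disjoint union of paths. It therefore has no cycles, so it cannot contain any edge beyond the path $u_1\cdots u_k$, since such a chord would close a cycle. Thus $G[N(v)]$ is exactly this single spanning path, i.e.\ $m=1$, and $v$ is adjacent to all of $u_1,\dots,u_k$. Sending $v\mapsto v_0$ and $u_i\mapsto v_i$ gives an isomorphism onto $F_k$.

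The main obstacle is justifying the two structural facts above, and that $v$ meets the outer face in exactly one angle. If citing them as standard is not acceptable, I would prove them directly: maximality forces $2$-connectedness and triangular inner faces by exhibiting an addable edge in each failing case. The single outer angle at $v$ then follows because the outer boundary is a simple cycle through every vertex, so it passes through $v$ exactly once.
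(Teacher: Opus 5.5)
Your proof is correct and takes essentially the same route as the paper. Maximality forces each pair of consecutive neighbours of $v$ to be adjacent, because otherwise some bounded face would be a chordless cycle of length at least $4$; and Lemma~\ref{lem: neigh is subfan} rules out any further edges. The differences are only in presentation: you order the neighbours by the rotation at $v$ and use triangular inner faces, where the paper orders them along the Hamiltonian outer cycle, and you make explicit, via acyclicity of $G[N(v)]$, the ``no extra edges'' step that the paper leaves implicit.
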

\begin{proof}
    Since $G$ is maximal outerplanar, all vertices may be embedded as lying on an exterior face. We may then order all vertices such that $v$ is the first vertex $v_0$ and each $v_i$ is adjacent to $v_{i+1}$. 

    By Lemma \ref{lem: neigh is subfan}, $G[N(v)\cup\{v\}]$ is isomorphic to a subgraph of $F_{\deg(v)}$. Once all vertices are embedded to a cycle as the exterior face, if all neighbors of $v_0$ have consecutive indices, then $G[N(v)\cup\{v\}]$ is isomorphic to a fan. Otherwise, there exists $i$ and $j$ such that $v_i\in N(v)$, $i+1<j$, and $j$ is the minimal integer greater than $i$ such that $v_j\in N(v)$. For contradiction, assume that $v_i$ is not adjacent $v_j$. This implies that there exists some $1\leq \ell \leq j-i-1$ such that $v_i\to  v_{i+1} \to \ldots \to v_{i+\ell} \to v_j$ forms a cycle with no chords, contradicting the maximality of $G$. Altogether, for all $v_i\in N(v)$, $v_i$ is adjacent to $v_j$ where $j$ is the minimal integer greater than $j$ such that $v_j\in N(v)$. Therefore, $G[N(v)\cup\{v\}]$ is isomorphic to $F_{\deg(v)}$. 
\end{proof}

Throughout the remainder of this section we make a distinction between the labeling of the vertices inherited from the graph and the labeling inherited by the isomorphism to the fan graph. We make use of $u_j$ to denote a vertex $v_i$, labeled under the isomorphism to a particular fan graph.

\begin{lemma}\label{lem: separate outer}
    Let $G$ be a maximal outerplanar graph. Under the isomorphism that maps $G[N(v)\cup\{v\}]$ to $F_{\deg(v)}$, let $x,y\in N(v)$ map to non adjacent path vertices $u_i$ and $u_j$ such that $i<j$. We may produce a cut $C$ such that
    \begin{enumerate}
        \item $C$ disconnects at least two non-empty vertex sets,
        \item the vertices $u_{i+1}, \ldots, u_{j-1}$ lie in the same component, and $v$ lies in another component,
        \item $|C|\leq \frac{1}{2}\deg(x)+\frac{1}{2}\deg(y)+(j-i)$.
    \end{enumerate}
\end{lemma}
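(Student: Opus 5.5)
The plan is to take as one side of the cut the part of $G$ ``behind'' the chords $vx$ and $vy$. We then decide separately, for each of $x$ and $y$, which side it goes on, putting it on whichever side makes fewer of its edges cut.

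Fix an outerplanar embedding with the outer face a Hamiltonian cycle, as in the proof of Corollary~\ref{rem: fan}. The neighbours of $v$ then appear along this cycle in the order $u_1,\dots,u_{\deg(v)}$. Let $R$ be the set of vertices on the arc of the outer cycle strictly between $x=u_i$ and $y=u_j$ that does not contain $v$. Since $x$ and $y$ are non-adjacent path vertices of the fan, $j\ge i+2$, so $R\supseteq\{u_{i+1},\dots,u_{j-1}\}\neq\emptyset$.

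\textbf{Key structural step.} Every edge with an endpoint in $R$ has its other endpoint in $R\cup\{x,y,v\}$. Indeed, suppose $w\in R$ were adjacent to some $z$ outside $R\cup\{x,y,v\}$. Then $z$ lies on the other side of $x$ or $y$ along the outer cycle. The edge $wz$ would therefore have endpoints interleaved with those of the chord $vx$ or the chord $vy$ along the outer cycle. Two such chords of an outerplanar embedding must cross, which is a contradiction. This is the step I expect to need the most care, namely making the ``interleaving implies crossing'' argument precise for chords of the outer cycle. I would phrase it via the standard fact that two chords of the boundary cycle with alternating endpoints cannot both be drawn inside the disk without crossing.

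Now split the edges at $x$ into those going into $R$ and the rest. The rest includes $xv$ and any edges to the far side; note that $xy$ is not an edge. One of these two classes has at most $\frac12\deg(x)$ edges. If the edges into $R$ are fewer, leave $x$ outside $S$; otherwise put $x$ into $S$. Do the same for $y$. Let $S$ be $R$ together with whichever of $x,y$ were added, and set $C=C(S,V\setminus S)$.

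By the structural step, the edges of $C$ are of three kinds:
\begin{itemize}
\item edges from $v$ to $R$, which are exactly the $j-i-1$ edges $vu_{i+1},\dots,vu_{j-1}$, since the neighbours of $v$ inside $R$ are precisely these vertices;
\item at most $\frac12\deg(x)$ edges at $x$;
\item at most $\frac12\deg(y)$ edges at $y$.
\end{itemize}
There are no $x$--$y$ edges to worry about. Hence
\[
|C|\le \tfrac12\deg(x)+\tfrac12\deg(y)+(j-i-1),
\]
which is stronger than the bound in (3).

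Finally, $S$ contains $u_{i+1},\dots,u_{j-1}$, and $V\setminus S$ contains $v$. Both sides are therefore nonempty and $C$ separates them, which gives (1) and (2).

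If ``component'' is meant literally, note that $R$ together with $x$ and $y$ induces a connected subgraph, because it contains an arc of the outer cycle. So the side containing the $u_\ell$'s is connected, or at worst the $u_\ell$'s lie in a single component of $G[S]$ through that arc.
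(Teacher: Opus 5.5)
Your proof is correct, and it actually reaches the bound in (3), even with $j-i-1$ in place of $j-i$, which the paper's written argument does not. The paper uses the same skeleton. It treats the chords $vx$ and $vy$ as separators and deletes the spokes $vu_{i+1},\dots,vu_{j-1}$ together with the edges from $x$ and $y$ into one side. However, it chooses that side by vertex count ($|A_x|\le|B_x|$), argues separation through a $K_{2,3}$ minor, and concludes only $|C|\le\deg(x)+\deg(y)+(j-i)$. A vertex-count choice cannot produce $\frac12\deg(x)$. Moreover, when $A_x$ happens to be the side away from $R$, the path $u_{i+1},x,v$ survives the deletion.

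Your rule compares the number of edges from $x$ into $R$ with the number of remaining edges at $x$. That comparison is exactly what gives separation and the $\frac12\deg(x)$ term at once. Your structural fact, that every neighbour of $R$ outside $R$ is one of $x,y,v$, replaces the minor argument. Working with the arc $R$ also covers the cases $i=1$ or $j=\deg(v)$. There $vx$ or $vy$ is an outer-cycle edge, so $\{v,x\}$ or $\{v,y\}$ is not a separator at all.

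When you write it up, add a sentence justifying that $u_1,\dots,u_{\deg(v)}$ is the cyclic order of $N(v)$ along the outer cycle. Cyclically consecutive neighbours of $v$ are adjacent, since every inner face is a triangle; this is also what the proof of Corollary~\ref{rem: fan} establishes. Since $G[N(v)]$ is a path on $\deg(v)$ vertices, it must be exactly this path of cyclically consecutive neighbours. Hence the fan labelling agrees with the cyclic order up to reversal, and this is what makes the neighbours of $v$ in $R$ exactly $u_{i+1},\dots,u_{j-1}$.

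The interleaving step you were worried about needs no topology. The outer cycle together with two chords whose endpoints alternate is a subdivision of $K_4$, which contradicts outerplanarity.

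Finally, $S$ is always a contiguous arc of the outer cycle: $R$, possibly extended by $x$ and/or $y$. So $G[S]$ is connected and $S$ is exactly one component of $G\setminus C$, with $v$ outside it. This settles your last paragraph cleanly.
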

\begin{proof}
    Recall that $\{v,x\}$ and $\{v,y\}$ are separators. This implies that $v$ and $x$ partition $G$ into two subgraphs $A_x$ and $B_x$ such that there are no paths between $A_x$ and $B_x$. Similarly, we may define $A_y$ and $B_y$. Without loss of generality $|A_x|\leq |B_x|$ and $|A_y|\leq |B_y|$. Let $$ C = \{(v,u_{\ell})\mid i+1 \leq \ell \leq j-1 \} \cup \{(a, x) \mid a\in A_x\} \cup \{(a, y) \mid a\in A_y\}.$$
    \\
    For contradiction, assume that there exists a path in $G\setminus C$ from $u_k$ to $v$ where $i<k<j$. We have removed the edge $(v,u_k)$, hence the path contains some vertex $z$ with $z\notin \{v,u_i,u_j,u_k\}$. Then we form a $K_{2,3}$ minor with the vertices $v, u_i, u_k, u_j, z$ as pictured below.
    \\
\[\begin{tikzcd}[cramped,sep=small]
	& v & \\
	{u_i} & z & {u_j} \\
	& {u_k}
	\arrow[no head, from=1-2, to=2-3]
	\arrow[no head, from=2-1, to=1-2]
	\arrow[no head, from=2-2, to=1-2]
	\arrow[no head, from=3-2, to=2-1]
	\arrow[no head, from=3-2, to=2-2]
	\arrow[no head, from=3-2, to=2-3]
\end{tikzcd}\]
    \\
    This contradicts that $G$ is outerplanar. Therefore, the edges removed in $C$ define a cut with $|C|\leq \deg(x)+\deg(y)+(j-i)$. By construction, one connected component contains $v$ and another component contains the path vertices $u_{i+1}$ through $u_{j-1}$.
\end{proof}
In order to prove our main theorem we will make use of a similar process of the proof of Lemma \ref{lem: fan gon}, where we generate a set of vertices that serves as either a hitting set or forces unhit eggs to be separated by a bounded amount of edges. However, we will need to make a choice of vertices that generalizes to all possible maximal outerplanar graphs. In the next proof we take a set similar to the one in Lemma \ref{lem: fan gon} and generate a new set which satisfies the necessary properties.
\begin{lemma}\label{lem: Fan Process}
    Let $G$ be a maximal outerplanar graph on $n$ vertices with $n\geq 4$ and $\left( v_i,v_j\right)$ an interior edge. If $\deg\left( v_i\right)>\sqrt{2n}$ and $\deg\left( v_i\right)\geq \deg\left( v_j\right)$, then there exists a set $\mathcal{H}$ that satisfies the following:
    \begin{enumerate}
        \item $v_i,v_j\in \mathcal{H}$.
        \item $G\setminus \mathcal{H}$ is separated into components $C_1, \ldots, C_m$ such that $|C(C_i, G\setminus C_i)|\leq  (2+\sqrt{2})\sqrt{n}+4$. That is, the outdegree of each component is at most $(2+\sqrt{2})\sqrt{n}+4.$ 
        \item $|\mathcal{H}|\leq 2\left\lfloor \frac{\Delta}{\sqrt{2n}}\right\rfloor$.
    \end{enumerate}
\end{lemma}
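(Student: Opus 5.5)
Mimic the choice of $H$ in Lemma \ref{lem: fan gon}, applied to the fan around $v_i$. By Corollary \ref{rem: fan}, $G[N(v_i)\cup\{v_i\}]\cong F_{d}$ with $d=\deg(v_i)$, path vertices $u_1,\dots,u_d$, and $v_j=u_s$ for some $s$. Cut the path $u_1,\dots,u_d$ into consecutive blocks of about $\sqrt{2n}$ vertices, and put into $\mathcal{H}$ the vertex $v_i$, the vertex $v_j$, and the block boundaries $u_{\ell\lceil\sqrt{2n}\rceil+1}$. That gives at most $\lfloor d/\sqrt{2n}\rfloor$ path vertices; counting $v_i$, $v_j$ and rounding gives $|\mathcal{H}|\le 2\lfloor \Delta/\sqrt{2n}\rfloor$, using $d>\sqrt{2n}$ so the floor is at least $1$. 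Condition (1) holds by construction.

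The main step is to bound the outdegree of each component of $G\setminus\mathcal{H}$. Every interior edge $(v_i,u_\ell)$ is a separator $\{v_i,u_\ell\}$, and the $K_{2,3}$-argument of Lemma \ref{lem: separate outer} applies. So a component $C$ not containing $v_i$ lies in the region between two consecutive chosen neighbours $x=u_a$ and $y=u_b$ (or beyond an end of the fan). Its boundary edges are of three kinds: edges to $v_i$ (at most $b-a-1\le\sqrt{2n}$ of them), edges to $x$, and edges to $y$.

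The obstacle is that $x$ and $y$ may have large degree. Lemma \ref{lem: separate outer} only gives $\tfrac12\deg(x)+\tfrac12\deg(y)$, which is too weak when $\deg(x)$ is of order $n$. I would handle this with the side-choice trick from that lemma. The edges from $x$ into $C$ are only those on the side of the separator $\{v_i,x\}$ containing $C$, so only the part of $N(x)$ between $v_i$ and $u_{a+1}$ in the fan order around $x$ counts. I would then choose the block endpoints flexibly: shift each boundary point within a window to a vertex of degree below $\sqrt n+4$. Corollary \ref{coll: vertex deg} guarantees fewer than $\sqrt n$ high-degree vertices overall, so a window of length about $\sqrt{2n}$ containing no low-degree vertex is impossible unless all its vertices are among the high-degree ones. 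In that case those vertices are put into $\mathcal{H}$ instead, with the budget $2\lfloor\Delta/\sqrt{2n}\rfloor$ absorbing this.

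The resulting count for a component is at most $\sqrt{2n}$ edges to $v_i$, plus at most $\sqrt n+4$ edges from each of $x$ and $y$ (at most $2\sqrt n+4$ together), plus edges to $v_j$ handled similarly. In total this gives $|C(C,G\setminus C)|\le(2+\sqrt2)\sqrt n+4$. The component containing neither fan region (on the far side of $\{v_i,v_j\}$) is bounded the same way using $\deg(v_j)\le\deg(v_i)$ and the block adjacent to $v_j$. I expect the delicate part to be making the degree-shifting argument precise and keeping $|\mathcal{H}|$ within the stated bound; the $c+1/c$ optimisation remark suggests this is exactly where $c=1$ enters.
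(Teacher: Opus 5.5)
There is a genuine gap in your treatment of $v_j$. You put $v_j$ into $\mathcal{H}$ as a forced vertex and subdivide only the fan of $v_i$. But $v_j$ cannot be shifted to a low-degree vertex, and $\deg(v_j)\le\deg(v_i)$ gives no control over how many edges $v_j$ sends into a single component.

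Here is a counterexample. Take the polygon $v_i=w_0,w_1,\dots,w_m,w_{m+1}=v_j,w_{m+2},\dots,w_{2m+1}$. Triangulate the half $v_i,w_1,\dots,w_m,v_j$ as a fan from $v_i$, and the half $v_j,w_{m+2},\dots,w_{2m+1},v_i$ as a fan from $v_j$. This graph is maximal outerplanar with $n=2m+2$, the edge $(v_i,v_j)$ is interior, and $\deg(v_i)=\deg(v_j)=m+2>\sqrt{2n}$. Since $N(v_i)=\{w_1,\dots,w_m,v_j,w_{2m+1}\}$, none of $w_{m+2},\dots,w_{2m}$ is in your $\mathcal{H}$, whatever boundaries you choose. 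These vertices form a path on the outer cycle, so they lie in one component. That component has at least $m-1=\frac n2-2$ edges to $v_j$, which violates (2) for large $n$. Your sentence about the far side of $\{v_i,v_j\}$ does not deal with this.

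The missing step is the paper's: run the same construction again on $G[N(v_j)\cup\{v_j\}]\cong F_{\deg(v_j)}$ to get a second set $H'$ of low-degree boundary vertices, and take $\mathcal{H}=H\cup H'$. The factor $2$ in $2\lfloor\Delta/\sqrt{2n}\rfloor$ pays for this second fan (one $\lfloor\Delta/\sqrt{2n}\rfloor$ per fan). So it is not available to absorb $v_i$ and $v_j$. In any case, your inequality $\lfloor d/\sqrt{2n}\rfloor+2\le 2\lfloor\Delta/\sqrt{2n}\rfloor$ is false when $\lfloor\Delta/\sqrt{2n}\rfloor=1$. Once $H'$ is added, you should also check the components next to the edge $(v_i,v_j)$, since they receive edges from both centres.

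Your edge count also does not close. After you shift boundary points to low-degree vertices, consecutive chosen vertices $u_a,u_b$ need not satisfy $b-a\le\sqrt{2n}$. So ``at most $\sqrt{2n}$ edges to $v_i$'' is unjustified; with windows of length about $\sqrt{2n}$, as you propose, that number can approach $2\sqrt{2n}$.

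The case of a window made up entirely of high-degree vertices cannot occur. By Corollary~\ref{coll: vertex deg}, any $\lceil\sqrt n\rceil$ consecutive fan vertices contain one of degree $<\sqrt n+4$. Adding a whole window to $\mathcal{H}$ would break the budget anyway.

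The paper accounts differently. It shifts each boundary by at most $\sqrt n/2$ and pays $\sqrt{2n}+\sqrt n$ for the edges to $v_i$. It then charges the endpoints only $\frac12\deg(x)+\frac12\deg(y)<\sqrt n+4$ via Lemma~\ref{lem: separate outer}. That split is what the $c+\frac1c$ remark optimises.

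You instead charge each endpoint its full degree. That is what your own side-of-the-separator observation actually supports, since the side containing $C$ can hold almost all of $N(x)$. With the true number of edges to $v_i$, this gives roughly $(3+\sqrt2)\sqrt n$, not $(2+\sqrt2)\sqrt n+4$. To keep every such count at most $\sqrt{2n}$, you would need a greedy choice: each new boundary is the last low-degree vertex within $\sqrt{2n}$ of the previous one. You would then also have to recount $|\mathcal{H}|$.
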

\begin{proof}
Let $k = \deg\left( v_i\right)$. By Corollary \ref{rem: fan}, $G[N\left( v_i\right)\cup\{v_i\}] \cong F_{k}$ with $v_i$ mapping to $u_0$ and the neighboring vertices mapping to $u_1$ through $u_k$. 
By Corollary \ref{coll: vertex deg}, $G$ has at most $\sqrt{n}$ vertices of degree greater than or equal to $\sqrt{n}+4$. This implies that for each $1\leq j \leq \left\lfloor\frac{k}{\sqrt{2n}}\right\rfloor$ there exists an integer $-\frac{\sqrt{n}}{2}\leq \ell_j\leq \frac{\sqrt{n}}{2}$ such that $\deg\left( u_{\left\lfloor j\sqrt{2n} \right\rfloor+\ell_j}\right)<\sqrt{n}$.

Thus, we define $H =\{u_0\}\cup \{u_{\left\lfloor j\sqrt{2n} \right\rfloor+\ell_j}\mid 1\leq j \leq \left\lfloor\frac{k}{\sqrt{2n}} \right\rfloor \}$. We note that by definition $|H| = \left\lfloor\frac{k}{\sqrt{2n}}\right\rfloor \leq \left\lfloor\frac{\Delta}{\sqrt{2n}}\right\rfloor $. 

Recall that the endpoints of any interior edges define a separator. As such, we look at the possible components separated by edges with vertices in $H$. 
For each $j$, by Lemma \ref{lem: separate outer}, there exists a cut $C_j$ with $u_{\left\lfloor j\sqrt{2n} \right\rfloor+\ell_j+1}$ through $u_{\left\lfloor \left( j+1\right)\sqrt{2n} \right\rfloor+\ell_{j+1}-1}$ in one component and $u_0$ in the other, such that
\begin{align*}
    |C_j|&\leq \frac{1}{2} \deg\left( u_{\left\lfloor j\sqrt{2n} \right\rfloor+\ell_j}\right)+\frac{1}{2}\deg\left( u_{\left\lfloor \left( j+1\right)\sqrt{2n} \right\rfloor+\ell_{j+1}}\right)+\left( \left\lfloor \left( j+1\right)\sqrt{2n} \right\rfloor+\ell_{j+1}-\left\lfloor j\sqrt{2n} \right\rfloor-\ell_j\right)\\
    &< \frac{1}{2}(\sqrt{n}+4)+\frac{1}{2}(\sqrt{n}+4)+\left( \sqrt{2n}+\ell_{j+1}-\ell_j\right)\\
    &\leq  \frac{1}{2}(\sqrt{n}+4)+\frac{1}{2}(\sqrt{n}+4)+\left( \sqrt{2n}+\sqrt{n}\right)\\
    &= (2+\sqrt{2})\sqrt{n}+4.
\end{align*}

We must also consider the ``endpoints" of the fan, as $\left( u_0,u_{\lfloor\sqrt{2n}\rfloor+\ell_{1}}\right)$ and $\left( u_0,u_{\left\lfloor\left\lfloor\frac{k}{\sqrt{2n}}\right\rfloor\sqrt{n}\right\rfloor+\ell_{\left\lfloor\frac{k}{\sqrt{2n}}\right\rfloor}}\right)$ are separators. We define the following respective cuts
$$C_0 = \{\left( u_0,u_m\right) \mid 1 \leq m \leq {\lfloor\sqrt{2n}\rfloor+\ell_{1}-1} \} \cup N\left( u_{\lfloor\sqrt{2n}\rfloor+\ell_{1}}\right) \text{ and }$$
$$C_{\left\lfloor\frac{k}{\sqrt{2n}}\right\rfloor} = \left\{\left( u_0,u_m\right) \mid {\left\lfloor\left\lfloor\frac{k}{\sqrt{2n}}\right\rfloor\sqrt{n}\right\rfloor+\ell_{\left\lfloor\frac{k}{\sqrt{2n}}\right\rfloor}-1} \leq m \leq k \right\}\cup N\left( u_{\left\lfloor\left\lfloor\frac{k}{\sqrt{2n}}\right\rfloor\sqrt{n}\right\rfloor+\ell_{\left\lfloor\frac{k}{\sqrt{2n}}\right\rfloor}}\right).$$

Now we repeat the above with $v_j$ to generate an $H'$ such that $|H'|\leq \left\lfloor \frac{\Delta}{\sqrt{2n}} \right\rfloor$ and all components obtained by removing an adjacent pair of vertices in $H'$ have outdegree bounded by $(2+\sqrt{2})\sqrt{n}+4$. 
Altogether, we set $\mathcal{H} = H\cup H'$ where 
$$|\mathcal{H}|\leq |H|+|H'|\leq 2  \left\lfloor \frac{\Delta}{\sqrt{2n}} \right\rfloor ,$$
and all components separated by $\mathcal{H}$, have a cut that disconnects them with at most $(2+\sqrt{2})\sqrt{n}+4$ edges.
\end{proof}

\subsection{Proof of main theorem}
\label{Subsec:main theorem}
We note that the lemmas proven so far make use of an assumption of $n$ meeting some inequality. As such, we take care separately of the cases of outerplanar graphs with fewer vertices. To do this, we make use of graph colorings and independent sets. 
\begin{definition}
    A set of vertices is \emph{independent} if no two vertices share an edge with each other. The \emph{independence number} of a graph, $\alpha(G)$, is the size of the largest possible independent set in $G$.
\end{definition}
\begin{definition}
    A \emph{vertex coloring} is a map that assigns a color to each vertex such that no two adjacent vertices have the same color. The \emph{chromatic number} of a graph, $\chi(G)$, is the smallest number of colors required to give $G$ a vertex coloring.
\end{definition}
As such, a valid vertex coloring yields collections of independent sets. We may then conclude that $\alpha(G) \geq \frac{n}{\chi (G)}$. Then we make use of the fact that gonality is an upper bound on scramble number.
\begin{lemma}[\cite{deveau2016gonality}*{Proposition 3.1}]\label{lem: gon bound}
    Let $G$ be a simple connected graph, $\gon(G)\leq |V|-\alpha(G)$, where $\alpha(G)$ is the independence number of $G$.
\end{lemma}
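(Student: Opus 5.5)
This is a result quoted from \cite{deveau2016gonality}, so strictly we may take it as given. Still, here is how I would prove $\gon(G)\leq |V|-\alpha(G)$ directly. The plan is to produce an explicit effective divisor of degree $|V|-\alpha(G)$ and rank at least $1$.

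\textbf{The divisor.} Let $I$ be a maximum independent set, so $|I|=\alpha(G)$. Define
$$D=\sum_{v\notin I}v,$$
which places one chip on each vertex outside $I$. Then $\deg D = |V|-\alpha(G)$. It remains to show that for every vertex $q$, the divisor $D-q$ is equivalent to an effective divisor.

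\textbf{Vertices outside $I$.} If $q\notin I$, then $D-q$ is already effective, and there is nothing to do.

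\textbf{Vertices in $I$.} If $q\in I$, fire the set $V\setminus\{q\}$ once. Equivalently, $q$ borrows, so each neighbor of $q$ sends one chip to $q$. Because $I$ is independent, every neighbor of $q$ lies outside $I$ and therefore holds exactly one chip. Firing $V\setminus\{q\}$ moves chips only along the edges between $q$ and its neighbors. Since $G$ is simple, each neighbor has exactly one edge to $q$ and loses exactly one chip, ending at $0$. Meanwhile $q$ gains $\deg(q)\geq 1$ chips, using connectivity and $|V|\geq 2$. Hence $D-q$ is equivalent to an effective divisor.

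\textbf{Conclusion and main obstacle.} Together these cases give $r(D)\geq 1$, and so $\gon(G)\leq |V|-\alpha(G)$. The one step needing care is the last: simplicity and independence must guarantee that no neighbor of $q$ goes negative. This holds because each neighbor has exactly one edge to $q$ and carries exactly one chip. The trivial case $|V|=1$ is handled separately.
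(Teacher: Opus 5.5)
Your argument is correct and is the standard proof of the cited Proposition 3.1; the paper gives no proof of its own. You place one chip on each vertex outside a maximum independent set $I$, and for $q\in I$ you fire $V\setminus\{q\}$, which stays effective because $G$ is simple and $I$ is independent.

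One small correction: the case $|V|=1$ cannot be ``handled separately,'' since $\gon(K_1)=1>0=|V|-\alpha(K_1)$. The statement tacitly requires $|V|\geq 2$, and that is exactly what your step $\deg(q)\geq 1$ uses.
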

Altogether, if we have a bound on the chromatic number of the graph, we may bound the scramble number. For both planar graphs and outerplanar graphs, such bounds on the chromatic number are known. Planar graphs have a chromatic number of at most $4$ \cite{ROBERTSON19972} and outerplanar graphs have chromatic number at most $3$ \cite{doi:10.1137/0607016}.

\begin{lemma}\label{lem:sn outer when small v3}
    Let $G$ be a simple graph on $n$ vertices. If
    \begin{enumerate}
        \item $G$ is planar with $n\leq 20$ or
        \item $G$ outerplanar with $n\leq 26$,
    \end{enumerate}
    then
    $\sn(G)\leq \gon(G)\leq (2+\sqrt{2})\sqrt{n}.$
\end{lemma}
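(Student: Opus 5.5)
We combine the chromatic number bounds with Lemma \ref{lem: gon bound} and Lemma \ref{lem:sn <= gon}. The chain of inequalities we aim for is
\[
\sn(G)\leq \gon(G)\leq n-\alpha(G)\leq n-\frac{n}{\chi(G)}=\frac{\chi(G)-1}{\chi(G)}\,n .
\]
The first inequality is Lemma \ref{lem:sn <= gon} and the second is Lemma \ref{lem: gon bound}. The third uses the remark that a proper coloring with $\chi(G)$ colors partitions $V(G)$ into $\chi(G)$ independent sets, so the largest has size at least $n/\chi(G)$. Lemma \ref{lem: gon bound} is stated for connected graphs. For a disconnected $G$ we would either pass to components, using that both gonality and scramble number are computed componentwise, or follow the paper's standing convention that all graphs are connected.

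\textbf{Planar case.} Here $\chi(G)\leq 4$ by \cite{ROBERTSON19972}, so $\sn(G)\leq \tfrac{3}{4}n$. It then suffices to check $\tfrac34 n\leq (2+\sqrt2)\sqrt n$, which is equivalent to $\sqrt n\leq \tfrac{4}{3}(2+\sqrt2)\approx 4.552$, i.e. $n\leq 20.72$. This holds for every integer $n\leq 20$.

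\textbf{Outerplanar case.} Here $\chi(G)\leq 3$ by \cite{doi:10.1137/0607016}, so $\sn(G)\leq \tfrac23 n$. We need $\tfrac23 n\leq (2+\sqrt2)\sqrt n$, which is equivalent to $\sqrt n\leq \tfrac32(2+\sqrt2)\approx 5.121$, i.e. $n\leq 26.2$. This holds for all $n\leq 26$.

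Since $\alpha(G)$ is an integer, one could sharpen the estimate to $\alpha(G)\geq\lceil n/\chi(G)\rceil$, but this is not needed. The only real work is the two numerical verifications, and both thresholds are exactly where $(2+\sqrt2)\sqrt n$ overtakes the linear bound, so the arithmetic just needs to be done carefully. I expect no genuine obstacle beyond the connectivity caveat and this arithmetic.
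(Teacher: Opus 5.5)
Your proof is correct and follows the paper's route. It uses the same chain $\sn(G)\leq\gon(G)\leq n-\alpha(G)\leq n-n/\chi(G)$, the same bounds $\chi(G)\leq 4$ (planar) and $\chi(G)\leq 3$ (outerplanar), and the same numerical verification; your thresholds $n\leq 20.72$ and $n\leq 26.2$ are accurate, and connectivity is covered by the paper's standing assumption that all graphs are connected.
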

\begin{proof}
    Since $G$ is simple, we have that $\gon(G)\leq n-\alpha(G)$. Consequently
    $$\sn(G) \leq \gon(G)\leq n-\alpha(G)\leq n-\frac{n}{\chi(G)}.$$
    We now proceed by cases:
    \begin{enumerate}
        \item If $n\leq 20$ and $G$ is planar, this implies that $\frac{3n}{4}\leq (2+\sqrt{2})\sqrt{n}$. Since $G$ is planar, then it is four colorable and $\chi(G)\leq 4$. Altogether,
    $$\sn(G) \leq \gon(G) \leq n-\frac{n}{4}= \frac{3n}{4}\leq (2+\sqrt{2})\sqrt{n}.$$
        \item If $n\leq 26$ and $G$ is outerplanar, this implies that $\frac{2n}{3}\leq (2+\sqrt{2})\sqrt{n}$. Since $G$ is outerplanar, then it is three colorable and $\chi(G)\leq 3$. Altogether,
    $$\sn(G)\leq \gon(G) \leq n-\frac{n}{3}= \frac{2n}{3}\leq (2+\sqrt{2})\sqrt{n}.$$
    \end{enumerate}

\end{proof}

\begin{theorem}\label{thm: main}
    Let $G$ be an outerplanar graph on $n$ vertices. $$\sn(G)\leq  (2+\sqrt{2})\sqrt{n}+4.$$
\end{theorem}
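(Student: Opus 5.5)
We reduce first to the case in which Lemmas \ref{lem:sparse outerplanar}--\ref{lem: Fan Process} apply. If $n\leq 26$, Lemma \ref{lem:sn outer when small v3} gives $\sn(G)\leq (2+\sqrt{2})\sqrt{n}$ directly, so we assume $n\geq 27$. In particular $n\geq 4$, and Corollary \ref{coll: vertex deg} is available. Scramble number is subgraph monotone \cite{HJJS}, and adding edges keeps the vertex count fixed, so we may also assume $G$ is maximal outerplanar. Fix an arbitrary scramble $\scramble$ on $G$. The goal is to show that $h(\scramble)$ or $e(\scramble)$ is at most $(2+\sqrt{2})\sqrt{n}+4$.

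\textbf{Case A: every vertex has degree at most $\sqrt{2n}$.} In this case we use the treewidth bound $\tw(G)\leq 2$. We choose a separator of size at most $3$: either a single interior edge $\{x,y\}$, or a triangular face of the triangulated polygon. We then argue as in the fan case, Lemma \ref{lem: fan gon}, and split according to the unhit eggs. Suppose the separator vertices are not a hitting set. Then either all unhit eggs pairwise intersect within one component, or two disjoint unhit eggs lie in distinct components. In the second situation, one component $D$ is separated from the rest by the edges from $D$ to the separator, and there are at most $\deg(x)+\deg(y)+\deg(z)\leq 3\sqrt{2n}$ of them. This exceeds the target constant, so instead I would walk down the weak dual tree: choose the separating edge $\{x,y\}$ so that the unhit eggs sit on a small side, and use only half-degrees, as in the $\tfrac12\deg$ bookkeeping of Lemma \ref{lem: separate outer}. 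This gives a cut of size at most $\sqrt{2n}+O(1)$, which is within the bound. The remaining subcase is that all unhit eggs meet a common subtree, and then a Helly-type argument on the tree decomposition adds one more vertex to obtain a hitting set of size at most $4$.

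\textbf{Case B: some interior edge $(v_i,v_j)$ has $\deg(v_i)>\sqrt{2n}$ and $\deg(v_i)\geq\deg(v_j)$.} Here we apply Lemma \ref{lem: Fan Process} to obtain $\mathcal{H}$ with $|\mathcal{H}|\leq 2\lfloor \Delta/\sqrt{2n}\rfloor\leq 2\lfloor n/\sqrt{2n}\rfloor=\sqrt{2n}$. Every component $C_t$ of $G\setminus\mathcal{H}$ has outdegree at most $(2+\sqrt{2})\sqrt{n}+4$. There are three possibilities.
\begin{enumerate}
\item If $\mathcal{H}$ hits every egg, then $h(\scramble)\leq \sqrt{2n}$.
\item If two unhit eggs lie in distinct components $C_s$ and $C_t$, then the edge set $C(C_s,G\setminus C_s)$ is an egg-cut: one egg lies inside $C_s$ and the other outside it. Hence $e(\scramble)\leq (2+\sqrt{2})\sqrt{n}+4$.
\item If all unhit eggs lie in a single component $C_t$, we recurse on $G[C_t]$, which is again outerplanar and has fewer vertices. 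Alternatively, we repeat the Case A analysis inside $C_t$ and add the resulting vertices to $\mathcal{H}$. The cuts produced inside $C_t$ must then be charged against the edges leaving $C_t$ as well.
\end{enumerate}

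The main obstacle is the third possibility of Case B, together with Case A. In both, the eggs concentrate in one piece, and we must keep the accumulated hitting set and the cut sizes under $(2+\sqrt{2})\sqrt{n}+4$ measured in terms of the original $n$, rather than letting them grow with the depth of the recursion. To prevent this, I would first choose the high-degree vertex $v_i$ canonically as a vertex of maximum degree. I would then argue that inside $C_t$ every vertex has degree below $\sqrt{2n}$ apart from at most the two ends of the fan. This puts $C_t$ in the Case A situation, so a single additional separator step suffices. That step adds at most $3$ vertices to the hitting set and uses cuts whose size is bounded by the fan estimate of Lemma \ref{lem: separate outer}.
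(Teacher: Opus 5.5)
Your reduction to maximal outerplanar graphs with $n\geq 27$ and possibilities (1)--(2) of Case B match the paper. The genuine gap is possibility (3), which you rightly flag as the crux but close with a false claim.

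\textbf{The structural claim in (3) fails.} Choosing $v_i$ of maximum degree does not make the vertices of $C_t$ other than the fan ends low-degree. Every vertex outside $N(v_i)\cup\{v_i\}$ lies beyond some chord $u_au_{a+1}$ of the fan, and the sub-polygon there can be triangulated arbitrarily. For instance, let $\deg(v_i)\approx n/2$. Beyond a chord $u_au_{a+1}$ with $v_j\notin\{u_a,u_{a+1}\}$, triangulate the sub-polygon as a fan with hub $w$ of degree about $n/3$. Then $w$ is adjacent to neither $v_i$ nor $v_j$, so $w\notin\mathcal H$. For the scramble of Lemma~\ref{lem: fan gon} on the fan of $w$, all unhit eggs lie in the single component $C_t\ni w$.

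This $C_t$ is not in the Case A situation. Your half-degree chord bound is also unusable there, since every chord separating two of these eggs contains $w$. Handling it needs a further application of Lemma~\ref{lem: Fan Process} at $w$. Lemma~\ref{lem:sparse outerplanar} permits about $\sqrt{n/2}$ vertices of degree at least $\sqrt{2n}$, and they can be nested inside one another's fan regions, so the process must iterate to a depth growing with $n$. Recursing on $G[C_t]$ as a standalone graph does not control this. A cut of $G[C_t]$ becomes a cut of $G$ only after adding the edges from its side to $G\setminus C_t$, and the hitting sets from successive levels accumulate.

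\textbf{What is missing} is a mechanism that stops these costs from growing with the depth. The paper iterates with balanced $2$-separators $\{v_i,v_j\}$ of the current region $\mathcal V$, so that $\mathcal V$ shrinks geometrically until $|\mathcal V|\leq\sqrt n$. Throughout, it retains at most two fan-sets $H_1,H_2$ from Lemma~\ref{lem: Fan Process}. Consider an egg hit by the older set but not by the newer one. Either it lies outside the new region $B'$, and the bounded outdegree of $B'$ gives an egg-cut and the process stops. Or it lies inside $B'$ and is simply treated as unhit. So the older set can always be discarded, and the final hitting set has size at most $|H_1|+|H_2|+1\leq 2\sqrt{2n}+1$ however many iterations occur.

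\textbf{Case A is also incomplete as written.} Your dichotomy omits two disjoint unhit eggs in the same component. Also, pairwise-intersecting eggs may need a whole triangle, not one extra vertex. You can repair it with Helly on the weak dual tree. Either the subtrees of all eggs pairwise meet, so one triangle is a hitting set of size $3$. Or two eggs have disjoint subtrees, and then a chord $\{x,y\}$ avoiding both eggs separates them, giving an egg-cut of at most $\tfrac12(\deg(x)+\deg(y))\leq\sqrt{2n}$ edges. This repair needs both ends of the chord to have small degree, which is exactly what fails inside $C_t$.
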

\begin{proof}
    By Lemma \ref{lem:sn outer when small v3}, if $n\leq 26$ then the bound is met. Throughout the remainder of the proof, we assume $n> 26$. Let $S$ be an arbitrary scramble on $G$ a maximal outerplanar graph. If $e(S) = \infty$, then $S$ is a bramble, hence by $G$ outerplanar we have $h(S)\leq \tw(G)\leq  2$.

    If $e(S)<\infty$, then we take the following approach. We define three subsets of vertices $\mathcal{V}$, $H_1$ and $H_2$. Initially, $\mathcal{V}=V(G)$ and $H_1 = H_2 = \emptyset$. The following algorithm terminates if  any of the following occur:
    \begin{itemize}
        \item $ H_1\cup H_2$ is a hitting set for $S$.
        \item We find an egg cut of size $\leq (2+\sqrt{2})\sqrt{n}+4$.
        \item $|\mathcal{V}|\leq \sqrt{n}$.
    \end{itemize}
    At each step of the algorithm, we make use of balanced separators to search through a sequence of nested subgraphs of $G$ for either a hitting set for $S$ or an egg cut of size at most $(2+\sqrt{2})\sqrt{n}+4$.
    Throughout the algorithm $\mathcal{V}$ will keep track of the vertices in the subgraph of $G$ which we are searching in.

    We start each iteration by finding a balanced separator $\{v_i, v_j\}$ for $G[\mathcal{V}]$. Let $A$ and $B$ be the components separated by the removal of $\{v_i, v_j\}$. We say two eggs $\mathcal{E}_1$ and $\mathcal{E}_2$ are separated by $\{v_i, v_j\}$ if $\mathcal{E}_1\subseteq A$ and $\mathcal{E}_2\subseteq B$. We say an egg $\mathcal{E}$ is \emph{unhit} if $\mathcal{E}\cap (\{v_i,v_j\}\cup H_1\cup H_2) = \emptyset$. We now proceed by cases depending on wether $H_1$ is nonempty or not.

    If $H_1 = \emptyset$ and $\{v_i,v_j\}$ does not separate unhit eggs, then all unhit eggs lie in one of the two components $A$ or $B$. Without loss of generality all unhit eggs lie in component $B$ with $|B|\leq \frac{2}{3}n$. We have two cases:
    \begin{enumerate}
        \item If $\{v_i,v_j\}$ do not hit any eggs, then we set $\mathcal{V} = B$ and we repeat the process.

        \item Otherwise, we set $H_1 = \{v_i,v_j\}$, $\mathcal{V} = B$ and we repeat the process.
    \end{enumerate}

    If $H_1=\emptyset$ and $\{v_i, v_j\}$ separates unhit eggs, then we have two cases:
        \begin{enumerate}
            \item If $\deg_{A}(v_i)+\deg_{A}(v_j)\leq 2\sqrt{2n}$ or $\deg_{B}(v_i)+\deg_{B}(v_i)\leq 2\sqrt{2n}$, then we define the cuts $C_A = \{(v_i,w)\in E(G)\mid w\in A\}\cup \{(v_j,w)\in E(G)\mid w\in A\}$ and $C_B = \{(v_i,w)\in E(G)\mid w\in B\}\cup \{(v_j,w)\in E(G)\mid w\in B\}$. Consequently, either $C_A$ or $C_B$ produces an eggcut with size at most $2\sqrt{2n}$.
            \item Otherwise, without loss of generality $\deg(v_i)>\sqrt{2n}$ with $\deg(v_i)\geq \deg(v_j)$.
            \\
            By Lemma \ref{lem: Fan Process} there exists a subset of vertices $\mathcal{H}$ with $|\mathcal{H|}\leq 2\left\lfloor\frac{\Delta}{\sqrt{2n}}\right\rfloor$ such that $v_i,v_j\in \mathcal{H}$ and $\mathcal{H} $ separates the graph into components that can be separated with the removal of at most $(2+\sqrt{2})\sqrt{n}+4$ edges. We then set $H_1 = \mathcal{H}$ and either
            \begin{itemize}
                \item $H_1$ is a hitting set. 
                \item Two eggs unhit by $H_1$ are separated, hence by construction there exists a cut $C$ with $|C|\leq (2+\sqrt{2})\sqrt{n}+4$, or
                \item no two unhit eggs are separated. Hence, all unhit eggs are contained in some component $B'$.
                In this case, we set $\mathcal{V} = B'$ and repeat the process above on the separated component $B'$.
            \end{itemize}
    \end{enumerate}
    If $H_1\neq \emptyset$, then we consider updating $H_2$. If either $|C_A|\leq 2\sqrt{2n}$ or $|C_B|\leq 2\sqrt{2n}$, we set $H_2 = \{v_i,v_j\}$. Otherwise, by Lemma \ref{lem: Fan Process} there exists a subset of vertices $\mathcal{H}$ with $|\mathcal{H|}\leq 2\left\lfloor\frac{\Delta}{\sqrt{2n}}\right\rfloor$ such that $v_i,v_j\in \mathcal{H}$ and $\mathcal{H}$ separates the graph into components that can be separated with at most $(2+\sqrt{2})\sqrt{n}+4$ many edges.
    We then set $H_2 = \mathcal{H}$. In either case, there are three scenarios:

    \begin{enumerate}
                \item $H_1\cup H_2$ is a hitting set.  
                \item Two eggs unhit by $H_2$ are separated, hence we can find a cut $C$ with $|C|\leq (2+\sqrt{2})\sqrt{n}+4$, or
                \item no two unhit eggs are separated. Hence, all unhit eggs are contained in a component $B'$. We set $\mathcal{V} = B'$ and in this case either:                \begin{enumerate}
                    \item $H_2$ hits all of the same eggs as $H_1$. In which case, we continue the process with $H_2$ as our new $H_1$. That is, our new $H_1$ is $H_2$ and our new $H_2$ is empty. 
                    \item There exist eggs hit by $H_1$ but not $H_2$. If any of these lie in $B'^C$ then we may find an egg cut of size $\leq (2+\sqrt{2})\sqrt{n}+4$. Otherwise, all eggs hit by $H_1$ but not $H_2$ lie in $B'$. As such, $H_1$ was again redundant. That is, we continue the process with $H_2$ as our new $H_1$.   
                    \end{enumerate}
        \end{enumerate}
    Recall that a condition for the algorithm terminating is based on the size of $\mathcal{V}$. If $|\mathcal{V}|\leq \sqrt{n}$, then $G[\mathcal{V}]$ has at most $2\sqrt{n}+3$ edges. This implies that any two disjoint unhit eggs in $G[\mathcal{V}]$ may be cut with the removal of at most $2\sqrt{n}+3$ edges. Otherwise, if all remaining unhit eggs pairwise intersect, then there exists a vertex $w$ such that $w\cup H_1 \cup H_2$ is a hitting set. Altogether, the algorithm will terminate in at most $\log_{\frac{3}{2}}(\sqrt{n})+1$ iterations.
    \begin{enumerate}
        \item If the algorithm terminates by finding an egg cut, then $\|S\|\leq (2+\sqrt{2})\sqrt{n}+4$.
        \item If the algorithm terminates by finding a hitting set, the worst case scenario is that both $H_1$ and $H_2$ are nonempty with the addition of one final vertex.
        Altogether,
        $$\|S\|\leq |H_1|+|H_2|+1 \leq 2\left\lfloor\frac{\Delta}{\sqrt{2n}}\right\rfloor+2\left\lfloor\frac{\Delta}{\sqrt{2n}}\right\rfloor +1 = 2\sqrt{2}\left\lfloor\frac{\Delta}{\sqrt{n}}\right\rfloor +1 \leq 2\sqrt{2n}+1.$$
    \end{enumerate}

    Altogether, we conclude that $\|S\|\leq \max\{(2+\sqrt{2})\sqrt{n}+4, 2\sqrt{2n}+1\} = (2+\sqrt{2})\sqrt{n}+4$ and $\sn(G)\leq (2+\sqrt{2})\sqrt{n}+4$. Since $G$ is a maximal outerplanar graph, by subgraph monotonicity of scramble number, we may conclude that the result holds for all outerplanar graphs.
\end{proof}

\section{Near Outerplanar Graphs} 
\label{Sec:Near Outerplanar}
There exist many notions of what it means for a graph to ``almost" be outerplanar. One such notion are the \emph{nearly outerplanar graphs}\cite{nopgraphs}. A graph is \emph{nearly outerplanar} (NOP) if it is edgeless or has an edge whose deletion results in an outerplanar graph. One well-studied example of NOP graphs are the wheel graphs. The wheel graph $W_m$ on $m+1$ vertices may be defined as $F_m$ with the addition of an edge between $v_1$ and $v_m$. See Figure~\ref{Fig:Wheel} for an example of the wheel graph embedded as coming from the fan graph and embedded in the usual ``wheel" configuration

\begin{figure}[h]
\begin{tikzpicture}[scale=2.0]

\draw [ball color=black] (-3,0) circle (0.55mm);
\draw [ball color=black] (-2,0) circle (0.55mm);
\draw [ball color=black] (-1,0) circle (0.55mm);
\draw [ball color=black] (0,0) circle (0.55mm);
\draw [ball color=black] (-1.5,1.25) circle (0.55mm);
\draw (-3,0)--(-2,0);
\draw (-2,0)--(-1,0);
\draw (-1,0)--(0,0);
\draw (-1.5,1.25)--(-3,0);
\draw (-1.5,1.25)--(-2,0);
\draw (-1.5,1.25)--(-1,0);
\draw (-1.5,1.25)--(0,0);
\draw (-1.5,1) node {{\small $v_0$}};
\draw (-3,-0.25) node {{\small $v_1$}};
\draw (-2,-0.25) node {{\small $v_2$}};
\draw (-1,-0.25) node {{\small $v_3$}};
\draw (0,-0.25) node {{\small $v_4$}};

\draw [<->] (-3:0)  arc (0:180:1.5);

\draw [ball color=black] (1,2) circle (0.55mm);
\draw [ball color=black] (1,0) circle (0.55mm);
\draw [ball color=black] (3,0) circle (0.55mm);
\draw [ball color=black] (3,2) circle (0.55mm);
\draw [ball color=black] (2,1) circle (0.55mm);
\draw (1,2)--(1,0);
\draw (1,0)--(3,0);
\draw (3,0)--(3,2);
\draw (2,1)--(1,2);
\draw (2,1)--(1,0);
\draw (2,1)--(3,0);
\draw (2,1)--(3,2);
\draw (1,2)--(3,2);
\draw (2,1.25) node {{\small $v_0$}};
\draw (0.75,2) node {{\small $v_1$}};
\draw (1,-0.25) node {{\small $v_2$}};
\draw (3,-0.25) node {{\small $v_3$}};
\draw (3.25,2) node {{\small $v_4$}};

\end{tikzpicture}
\caption{The wheel graph $W_4$. On the left $W_4$ is depicted $\mathcal{F}_4$ with an additional edge. On the right $W_4$ is embedded as the usual center vertex with spokes towards the ``outer" vertices}
\label{Fig:Wheel}
\end{figure}
In \cite{DEM21}, the authors showed that, like the fan graph, the wheel graph has a gonality of roughly $2\sqrt{n}$.
\begin{theorem}[\cite{DEM21} Theorem $5.1$]
    For all $m\geq 3$, we have
    $$\gon(W_m) = \lceil\sqrt{m}\rceil-1+\left\lceil \frac{m}{\lceil\sqrt{m}\rceil}\right\rceil$$
\end{theorem}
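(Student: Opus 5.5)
The plan is to prove the two inequalities separately. Throughout, write $s=\lceil\sqrt{m}\rceil$ and $q=\lceil m/s\rceil$, so the target value is $s-1+q$.

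\textbf{Upper bound.} I would build an explicit divisor of degree $s-1+q$ and rank at least $1$. First cut the rim cycle $v_1,\dots,v_m$ into $q$ consecutive arcs using $q$ \emph{breakpoint} vertices $b_1,\dots,b_q$. Space them so that between consecutive breakpoints, cyclically, there are at most $s-1$ interior vertices; this is possible because $q\cdot s\geq m$. Then put one chip on each $b_t$ and $s-1$ chips on $v_0$, giving $D$ of degree $s-1+q$. To reach an interior vertex $w$ of the arc between $b_t$ and $b_{t+1}$, fire the set $U=V\setminus I$, where $I$ is the set of interior vertices of that arc. The edges leaving $U$ are the $|I|\leq s-1$ spokes from $v_0$ and one rim edge at each of $b_t$ and $b_{t+1}$. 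So $v_0$ loses at most $s-1$ chips and each breakpoint loses one, and the firing is legal. Afterwards every vertex of $I$ carries at least one chip, so $w$ receives a chip. Hence $r(D)\geq 1$ and $\gon(W_m)\leq s-1+q$.

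\textbf{Lower bound.} Let $D$ be effective of rank at least $1$. I would replace $D$ by its $v_0$-reduced representative and set $a=D(v_0)$. Let the rim support of $D$ consist of $b$ vertices; these split the rim into $b$ chip-free gaps. The key claim is: \emph{if some gap contains $L$ consecutive chip-free rim vertices, then $a\geq L$}, possibly up to an additive constant that I would fix by care at the ends of the gap.

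To prove the claim, I would pick $w$ in the gap and run Dhar's burning algorithm for $D$ started at $w$. Since $D(w)=0$, rank at least $1$ forces the fire not to burn the whole graph, so some nonempty set $U$ avoiding $w$ can be legally fired. I would then analyse what $U$ can be. If $v_0\notin U$, every burnt neighbour of a fired rim vertex must be covered by its chips. But the fire spreads along the chip-free gap and into $v_0$ unless $v_0$ holds at least as many chips as it has burning spokes. Using that $D$ is $v_0$-reduced to rule out the alternatives, I expect to show that $U$ contains $v_0$ and omits all $L$ gap vertices. Then $v_0$ must send a chip along each of these $L$ spokes, so $a\geq L$.

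Given the claim, the largest gap has length at least $(m-b)/b$, so $\deg D\geq a+b\geq \lceil (m-b)/b\rceil + b$. Minimising over the integer $b$ should give exactly $s-1+q$, matching the construction, in which $b=q$ and $a=s-1$.

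\textbf{Main obstacle.} The hard step is the claim about gaps. Dhar's algorithm can stall in configurations where several chip-bearing rim vertices jointly block the fire, so the fired set need not be the clean complement of a single gap. I would first try to show, from $v_0$-reducedness, that any legal firing set $U$ excluding $w$ must contain $v_0$. I would then bound the number of burning spokes at $v_0$ from below by the length of the gap containing $w$, choosing $w$ in the middle of a maximal gap. Finally, I would handle off-by-one issues at the gap boundaries by a direct check of small cases.
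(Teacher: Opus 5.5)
The paper gives no proof of this statement: it is quoted from \cite{DEM21} only to contrast $\gon(W_m)$ with $\sn(W_m)$, so there is no in-paper argument to compare yours with. Judged on its own, your two-sided argument is correct and complete once the hedged steps are tightened, and none of them is a real obstacle.

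\textbf{The gap claim.} Let $U$ be the unburnt set when Dhar's algorithm is run from $w$ on the $v_0$-reduced $D$.
\begin{enumerate}
\item $U\neq\emptyset$. Otherwise $D$ would be $w$-reduced with $D(w)=0$, contradicting $r(D)\geq 1$.
\item $U$ is legally fireable, so $v_0$-reducedness forces $v_0\in U$.
\item All $L$ vertices of the gap containing $w$ burn, since they form a chip-free path through $w$.
\end{enumerate}
So $v_0$ has at least $L$ burnt neighbours, and legality at $v_0$ gives $a\geq L$ exactly. There is no additive constant, and you do not need to understand the rest of $U$, choose $w$ in the middle of the gap, or check small cases.

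\textbf{The optimisation.} You only wrote that the minimisation ``should give'' $s-1+q$; it needs a short verification. First,
\[
\deg D\geq b+\lceil (m-b)/b\rceil=b-1+\lceil m/b\rceil\geq \lceil 2\sqrt{m}\rceil-1,
\]
using $b+m/b\geq 2\sqrt{m}$ and integrality. Then write $m=k^2+c$ with $0\leq c\leq 2k$. A direct check in each of the ranges $c=0$, $1\leq c\leq k$ and $k+1\leq c\leq 2k$ shows $s+q=\lceil 2\sqrt{m}\rceil$, so the lower bound is exactly $s-1+q$.

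\textbf{Two small points.}
\begin{enumerate}
\item Treat $b=0$ separately. There the whole rim burns, so $a\geq m\geq \lceil 2\sqrt{m}\rceil-1$.
\item In the upper bound, note that $m\geq 3$ forces $q\geq 2$. This is what guarantees that each breakpoint has only one rim edge into the arc $I$ being fired toward, so it loses only one chip.
\end{enumerate}
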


In order to extend our result to near outerplanar graphs, we use the following result.
\begin{lemma}\label{lem: delete edge}
    Let $G$ be an arbitrary graph. If
    $G'$ can be obtained from $G$ via the deletion of an edge $(u,v)$, then
    $$\sn(G') \leq \sn(G) \leq \sn(G')+1.$$
\end{lemma}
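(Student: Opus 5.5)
The plan is to prove the two inequalities separately, comparing scrambles on $G$ and on $G'$ egg by egg. Since $G'$ is $G$ with the edge $(u,v)$ deleted, both graphs have the same vertex set. Hitting sets therefore mean the same thing in both, and only connectivity of eggs and sizes of cuts can change. The key observation is that for any bipartition $(X,X^c)$ of the common vertex set, the edge sets satisfy $C_{G'}(X,X^c)\subseteq C_G(X,X^c)$ and $|C_G(X,X^c)|\leq |C_{G'}(X,X^c)|+1$.

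\textbf{Lower inequality $\sn(G')\leq \sn(G)$.} This is subgraph monotonicity from \cite{HJJS}, already used in the proof of Theorem \ref{thm: main}. To keep the argument self-contained, take a scramble $\scramble$ on $G'$ of maximal order. Every egg is connected in $G'$, hence connected in $G$, so $\scramble$ is also a scramble on $G$ and $h$ is unchanged. An egg-cut of $G$ for $\scramble$ is determined by a bipartition $(X,X^c)$ with an egg entirely on each side. The same bipartition is an egg-cut in $G'$, and its size there is no larger than in $G$. Hence $e_G(\scramble)\geq e_{G'}(\scramble)$, so $\|\scramble\|_G\geq \|\scramble\|_{G'}$, which gives $\sn(G)\geq \sn(G')$.

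\textbf{Upper inequality $\sn(G)\leq \sn(G')+1$.} Take a scramble $\scramble$ on $G$ of maximal order. Some eggs may become disconnected in $G'$, and this can only happen to eggs that contain both $u$ and $v$ and use the edge $(u,v)$. I will build a scramble $\scramble'$ on $G'$ as follows:
\begin{itemize}
\item keep every egg that is still connected in $G'$;
\item replace each egg that becomes disconnected in $G'$ by its component containing $u$.
\end{itemize}
Every egg of $\scramble'$ is contained in the corresponding egg of $\scramble$, so any hitting set of $\scramble'$ hits $\scramble$. Hence $h(\scramble)\leq h(\scramble')$.

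For cuts, take an egg-cut $(X,X^c)$ for $\scramble'$ in $G'$. Each side contains an egg of $\scramble'$, and that egg is a subset of some egg of $\scramble$, but not necessarily the whole of it, since a replaced egg has lost its $v$-component. So the bipartition need not be an egg-cut for $\scramble$, and controlling this is the step I expect to be the main obstacle.

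\textbf{Handling the obstacle.} I will try to resolve it with an extra hitting vertex rather than through cuts. Compare $\scramble'$ with $\scramble$ modified so that every egg containing both $u$ and $v$ is removed. Those removed eggs are all hit by $u$, so
\[
h(\scramble)\leq h(\text{remaining eggs})+1.
\]
The remaining eggs avoid at least one of $u,v$, so each is still connected in $G'$ and is an egg of $\scramble'$ in its original form. An egg-cut in $G'$ for the remaining eggs is then a genuine egg-cut for $\scramble$ in $G$, of size at most one more. Combining these gives
\[
\|\scramble\|_G\leq \|\scramble_{\mathrm{rem}}\|_{G'}+1\leq \sn(G')+1.
\]
The one case to check separately is when no egg remains after the removal: then $u$ alone hits every egg, so $\|\scramble\|_G\leq 1$.
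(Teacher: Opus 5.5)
Your final argument is correct and is essentially the paper's proof. You set aside the eggs containing both $u$ and $v$ (the paper sets aside only those that actually become disconnected), hit them all with one endpoint, and add the edge $(u,v)$ to any egg-cut of the remaining scramble in $G'$; your self-contained edge-deletion monotonicity argument replaces the paper's citation of \cite{HJJS}, and the abandoned $u$-component construction can simply be deleted.
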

\begin{proof}
    By subgraph monotonicity of scramble number, we obtain that $\sn(G')\leq \sn(G).$ Let $S$ be an arbitrary scramble on $G$.  We define $S' = \{\mathcal{E}\in S\mid \mathcal{E}\setminus$ to be the collection of eggs in $S$ that remain connected once removing the edge $(u,v)$. If $S' =\emptyset$, then all eggs intersect $u$ and $v$. This implies that $h(S) =1 \leq \sn(G')$.

    As such, we consider the case where $S'$ is nonempty and a scramble over $G'$. We may now proceed by cases 
    \begin{enumerate}
        \item If $h(S')\leq \sn(G')$ as a scramble on $G'$. This implies that we have a hitting set $H$ of size at most $\sn(G')$ on $S'$. Since all eggs in $S\setminus S'$ intersect $v$, then $H\cup \{v\}$ is a hitting set for $S$ and $\|S\|\leq \sn(G')+1.$

        \item If $e(S')\leq \sn(G')$ as a scramble on $G'$. This implies that in $G'$ there exists a cut $C$  such that $|C|\leq \sn(G')$ and $C$ separates two eggs $\mathcal{E}_1$ and $\mathcal{E}_2$ in $S'$. This implies that in $G$ $C\cup\{(u,v)\}$ is a cut which separates $\mathcal{E}_1$ and $\mathcal{E}_2$. Thus $\|S\| \leq \sn(G')+1$.
    \end{enumerate}
    Since $S$ was arbitrary, we conclude that $\sn(G)\leq \sn(G')+1.$
\end{proof}
By an inductive argument on Lemma \ref{lem: delete edge} we obtain the following.
\begin{corollary}\label{coll: k delete}
    Let $G$ be an arbitrary graph. If
    $G'$ can be obtained from $G$ via the deletion of $k$ edges, then
    $$\sn(G') \leq \sn(G) \leq \sn(G')+k.$$
\end{corollary}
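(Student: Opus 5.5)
We argue by induction on $k$, using Lemma \ref{lem: delete edge} as the inductive step. Let $e_1,\ldots,e_k$ be the deleted edges, and set $G_0=G$ and $G_t=G_{t-1}\setminus e_t$ for $1\le t\le k$, so that $G_k=G'$. Each $G_t$ is obtained from $G_{t-1}$ by deleting a single edge. Lemma \ref{lem: delete edge} therefore gives
$$\sn(G_t)\le \sn(G_{t-1})\le \sn(G_t)+1 \qquad\text{for each } 1\le t\le k.$$

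The lower bound $\sn(G')\le \sn(G)$ follows by chaining the left inequalities: $\sn(G_k)\le \sn(G_{k-1})\le\cdots\le \sn(G_0)$. One could also cite subgraph monotonicity directly, since $G'$ is a subgraph of $G$.

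For the upper bound, the inductive hypothesis for $k-1$, applied to $G$ and $G_{k-1}$, gives $\sn(G)\le \sn(G_{k-1})+(k-1)$. The single-edge lemma applied to $G_{k-1}$ and $G_k=G'$ gives $\sn(G_{k-1})\le \sn(G')+1$. Combining these yields $\sn(G)\le \sn(G')+k$. The base case $k=0$ is trivial, and $k=1$ is exactly Lemma \ref{lem: delete edge}.

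The one point to check is that Lemma \ref{lem: delete edge} was stated for an arbitrary graph, with no connectivity hypothesis on the graph after deletion. The intermediate graphs $G_t$ may become disconnected, but since the lemma only requires an arbitrary graph, it applies at every step. Otherwise the induction is a routine telescoping argument.
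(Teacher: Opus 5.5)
Your proposal is correct and takes the same approach as the paper, which obtains this corollary by the same induction (telescoping) on Lemma \ref{lem: delete edge}. One refinement to your last paragraph: each intermediate $G_t$ contains $G'$ as a spanning subgraph. So if $G'$ is connected, as under the paper's standing convention, every $G_t$ is connected too and the connectivity worry does not arise.
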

We may now recreate the proof of Lemma \ref{lem: fan gon} to compute the scramble number of the wheel graph.
\begin{lemma}
    Let $W_m$ be the wheel graph on $m+1$ vertices. If $m = k^2+c$ for some $k\in \Z_{>0}$ with $0\leq c \leq 2k$, then
    $$\sn(W_m) = \left\lceil\sqrt{m}\right\rceil =  \begin{cases}
        \lfloor\sqrt{m}\rfloor+1 & \text{ if }c=0\\
        \lfloor\sqrt{m}\rfloor+2 & \text{ otherwise }
    \end{cases}.$$
\end{lemma}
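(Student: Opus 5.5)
The plan is to prove matching lower and upper bounds, reusing the structure of the proof of Lemma \ref{lem: fan gon}. The one new feature is that the rim of $W_m$ is the cycle $v_1,\dots,v_m$ rather than a path. Consequently every rim arc $P$ that is not the whole rim is attached to the rest of the graph by exactly $|P|+2$ edges: $|P|$ spokes and two cycle edges. On a fan, an arc touching $v_1$ or $v_m$ had only $|P|+1$.

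\textbf{Lower bound.} I would build a scramble with the hub egg $\E_0=\{v_0\}$ and rim eggs that are pairwise disjoint consecutive arcs. For $c=0$, cut the rim into $k$ arcs of $k$ vertices each. There are then $k+1$ disjoint eggs, so $h=k+1$. Any egg-cut must separate some rim egg from $v_0$, so it contains all spokes to that side. If that side is not the whole rim, it also contains at least two cycle edges, so the cut has at least $k+2\ge k+1$ edges. Hence the order is $k+1$.

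For $c\ge 1$ I would aim for $k+2$ disjoint eggs, the hub plus $k+1$ arcs, with every arc of size at least $k$. That forces every egg-cut to have size at least $k+2$. Such arcs fit exactly when $m\ge k(k+1)$, that is, when $c\ge k$. For $1\le c<k$ they do not fit, and I would first try to compensate using the fact that cutting a single arc costs its size plus $2$. Arcs of size $k-1$ also cost $k+1$, so I would need an uneven mixture, or to exploit that an egg-cut must isolate a \emph{union} of rim eggs together with their connecting vertices.

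\textbf{Upper bound.} Lemma \ref{lem: delete edge} applied to the edge $v_1v_m$, together with Lemma \ref{lem: fan gon}, gives $\sn(W_m)\le \sn(F_m)+1$. This already equals the target in some ranges of $c$. In the remaining ranges I would redo the hitting-set argument of Lemma \ref{lem: fan gon} directly on the wheel. Take $H=\{v_0\}\cup\{v_{jk}\}$ for roughly evenly spaced indices, with enough indices that the rim components of $W_m\setminus H$ have at most $k$ vertices or so. Then argue as before. If the unhit eggs pairwise intersect, one more vertex completes a hitting set. If two unhit eggs lie in different components, or are disjoint inside one component, isolate a component or a short arc; this costs its size plus $2$ edges, which I would check is within the target.

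\textbf{Main obstacle.} The delicate step is matching the two bounds in the intermediate range $1\le c<k$. The count above suggests a threshold near $c=k$ rather than at $c=0$. I would resolve this first by checking small cases, say $m=5,6,10,11$, by hand. I would then adjust either the spacing of $H$ or the egg construction, making sure the final case split agrees with the stated formula. The identity $\lceil\sqrt m\rceil$ should be compared against the displayed cases as part of this check.
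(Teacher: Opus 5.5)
\textbf{Gap 1: the lower bound for $1\le c\le k-1$.} You leave this range unresolved and suspect the formula is false there. It is true, and the threshold really is at $c=0$: for example $\sn(W_5)=4$, where $k=2$ and $c=1$. The missing idea is to give up disjointness of the rim eggs. The paper takes $\{v_0\}$ together with \emph{all} $m$ arcs of $k$ consecutive rim vertices.
\begin{itemize}
\item Egg-cuts still cost at least $k+2$. The side $X$ not containing $v_0$ must contain a $k$-arc, so the cut has at least $k$ spokes plus at least two rim edges. The only exception is when $X$ is the whole rim, and then the cut has $m\ge k+2$ edges.
\item Hitting becomes expensive. A hitting set needs $v_0$ and a set $T$ of $t$ rim vertices. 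The complement of $T$ consists of at most $t$ arcs, each of size at most $k-1$, since a longer gap would contain an unhit $k$-arc. Hence $m\le tk$, and $m>k^2$ forces $t\ge k+1$, so $h\ge k+2$.
\end{itemize}
This single scramble gives $\sn(W_m)\ge k+2$ for every $c\ge 1$, so your disjoint-arc construction is unnecessary. Combined with $\sn(W_m)\le \sn(F_m)+1=k+2$, valid for $c\le k+1$, it settles $1\le c\le k+1$.

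\textbf{Gap 2: the upper bound for $c=0$ and $c\ge k+2$.} Your step here would fail as written. Concluding, as in the fan proof, that ``one more vertex completes a hitting set'' gives the bound $|H|+1$. But for the cut step to stay within the target, the rim components of $W_m\setminus H$ must have size at most $k-1$ (resp.\ $k$). This forces $|H|\ge k+1$ (resp.\ $|H|\ge k+2$), so you overshoot by one.

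The fix goes as follows. An egg $E$ missed by $H$ is a subarc of some rim component $P$.
\begin{itemize}
\item If some egg is disjoint from $E$, then $C(E,V\setminus E)$ is an egg-cut of size $|E|+2\le |P|+2$.
\item Otherwise $E$ itself is a hitting set of size at most $|P|$.
\end{itemize}
Hence $\|S\|\le \max(|H|,s+2)$, where $s$ is the largest component size. The paper's sets then give exactly the targets:
\begin{itemize}
\item for $c=0$, $H=\{v_0,v_k,v_{2k},\dots,v_{k^2}\}$ has $s=k-1$, giving $k+1$;
\item for $c\ge k+2$, $H=\{v_0\}\cup\{v_{j(k+1)+1}\mid 0\le j\le k\}$ has $s\le k$, giving $k+2$.
\end{itemize}

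Finally, as you suspected, the middle expression in the statement is off. $\lceil\sqrt m\rceil$ equals $k$ for $c=0$ and $k+1$ otherwise, so the case formula, which is the correct value, equals $\lceil\sqrt m\rceil+1$.
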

\begin{proof}
    By Lemma \ref{lem: fan gon} and Lemma \ref{lem: delete edge}, we have that $$\lfloor\sqrt{m}\rfloor+1\leq \sn(W_m) \leq 
        \lfloor\sqrt{m}\rfloor+2  \text{ if }0\leq c\leq k+1$$
        and
    $$\lfloor\sqrt{m}\rfloor+2\leq \sn(W_m) \leq 
        \lfloor\sqrt{m}\rfloor+3  \text{ if } c> k+1.$$    
    This implies that for $1\leq c \leq k+1$ it suffices to build a scramble $S$ such that $\|S\| = k+2$. 
    We define $S$ to be the scramble with $v_0$ and all paths of size $k$ on the exterior cycle. Since $v_0$ is connected to all other vertices and all eggs satisfy $|\mathcal{E}|\leq k$, then $e(S) = k+2$. Now note that, to hit all disjoint paths and $v_0$, we need at least $k+1$ vertices. Since $c>0$, by the pigeonhole principle, we need at least one more vertex to form a hitting set. As such, $h(S)\geq k+2$ and $\|S\| = k+2$.

    For $c=0$ and $c>k+1$, we recreate the proof of Lemma \ref{lem: fan gon} and produce a set $H$ which forces either a hitting set or an egg cut of size at most $k+2$. For $c= 0$ we use $H = \{v_{jk}\mid 0\leq j\leq k\}$ with $|H| = k+1$ and, for $c>k+1$ we use $H =\{v_0\} \cup \{v_{j(k+1)+1}\mid 0\leq j\leq k\}$ with $|H| = k+2$.
\end{proof}

As such, there is a gap between the scramble number and the gonality of $W_n$. Altogether, we may now define a graph to be \emph{$k$-nearly outerplanar} if it is edgeless or has a subset of at most $k$ edges whose deletion results in an outerplanar graph. Then combining Theorem \ref{thm: main} and Corollary \ref{coll: k delete} we may obtain a bound on the scramble number of $k$-nearly outerplanar graphs.
\begin{corollary}\label{coll: k-NOP}
    Let $G$ be a $k$-nearly outerplanar graph on $n$ vertices.  
    $$\sn(G)\leq (2+\sqrt{2})\sqrt{n}+4+k.$$
\end{corollary}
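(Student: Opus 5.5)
The corollary follows by combining Theorem \ref{thm: main} with Corollary \ref{coll: k delete}; no new construction is needed.

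First dispose of the degenerate case. If $G$ is edgeless, every egg is a single vertex, so no egg-cut can be smaller than the trivial one and the order is controlled by hitting alone. Moreover an edgeless graph is itself outerplanar, so Theorem \ref{thm: main} already gives $\sn(G)\leq (2+\sqrt{2})\sqrt{n}+4$, which is at most the claimed bound since $k\geq 0$.

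Otherwise, by definition there is a set $F$ of $j\leq k$ edges such that $G'=G\setminus F$ is outerplanar. Deleting edges does not change the vertex set, so $G'$ still has exactly $n$ vertices. Theorem \ref{thm: main} then gives $\sn(G')\leq (2+\sqrt{2})\sqrt{n}+4$. Applying Corollary \ref{coll: k delete} with $j$ deleted edges yields
\[
\sn(G)\leq \sn(G')+j\leq (2+\sqrt{2})\sqrt{n}+4+k .
\]

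The only point needing care is the hypotheses of the two results being invoked. The statement of Theorem \ref{thm: main} is for arbitrary outerplanar graphs, but the argument around it assumes connectivity, which $G'$ may lack after deleting edges. I would handle this by embedding $G'$ into a maximal outerplanar graph on the same $n$ vertices; such a graph is connected. Subgraph monotonicity of scramble number then transfers the bound back to $G'$, exactly as in the last line of the proof of Theorem \ref{thm: main}.

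Corollary \ref{coll: k delete} is stated for arbitrary graphs, and its proof is induction on Lemma \ref{lem: delete edge}. That lemma's argument never requires $G'$ to be connected, so it applies here.

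With these checks the bound is immediate. I expect no real obstacle beyond this bookkeeping about connectivity and the edgeless case.
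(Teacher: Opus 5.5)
Your proposal is correct and is exactly the paper's argument: the corollary is obtained by applying Theorem~\ref{thm: main} to the outerplanar graph $G'$ (on the same $n$ vertices) left after deleting at most $k$ edges, and then using Corollary~\ref{coll: k delete} to pass back to $G$. Your connectivity check via a maximal outerplanar supergraph is harmless bookkeeping that the paper leaves implicit, and the edgeless case needs no separate treatment since deleting zero edges already qualifies; your aside about egg-cuts there is unnecessary, and not quite accurate, because in an edgeless graph the empty edge set already separates distinct one-vertex eggs.
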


\section{Open Questions}
\label{Sec: Open Questions}
We remark that the bound proven in Theorem \ref{thm: main} has a leading term of $(2+\sqrt{2})\sqrt{n}$. This constant $2+\sqrt{2}$ is a result of balancing both the number of vertices selected for a potential hitting set and the size of the cuts to separate the components defined by those vertices. However, the planar graphs whose scramble number has been studied have a scramble number bounded by roughly $\sqrt{n}$. We may then ask wether the bound can be improved or if there exist a family of outerplanar graphs whose scramble number meets the bound in Theorem \ref{thm: main}.

The question of whether the scramble number of planar graphs is bounded by $O(\sqrt{n})$ remains open. However, Corollary \ref{coll: k-NOP} implies that any counterexample to the conjecture must be ``far" from being outerplanar. 

We also remark that the upper bound on the scramble number of the fan graph and the wheel graph may be obtained by a graph invariant known as \emph{screewidth} \cite{CFGMMMORW}. However, screewidth was not the approach used in the proof of the main result of the paper. As a consequence, we obtain the following open question:
\begin{question}
    Is the screewidth of outerplanar graphs bounded by $O(\sqrt{n})$?
\end{question}
Scramble number is studied primarily because of its relation to graph gonality. Hence, we may also ask the following:
\begin{question}
    Is the gonality of outerplanar graphs bounded by $O(\sqrt{n})$?
\end{question}

\bibliography{references}

\end{document}